\title{Higher genus theory}
\newcommand{\Z}{\mathbb{Z}}
\newcommand{\Q}{\mathbb{Q}}
\newcommand{\FF}{\mathbb{F}}
\newcommand{\bmt}{\begin{pmatrix}}
\newcommand{\emt}{\end{pmatrix}}
\newcommand{\bsm}{\left(\begin{smallmatrix}}
\newcommand{\esm}{\end{smallmatrix}\right)}
\newtheorem{definition}{Definition}[section]
\newtheorem{remark}[definition]{Remark}
\theoremstyle{plain}
\newtheorem{proposition}[definition]{Proposition}
\newtheorem{theorem}[definition]{Theorem}
\newtheorem{corollary}[definition]{Corollary}
\theoremstyle{remark}
\numberwithin{equation}{section}
\renewcommand{\phi}{\varphi}
\newcounter{nootje}
\newcommand{\beq}{\begin{equation}}
\newcommand{\eeq}{\end{equation}}
\newcommand{\beqs}{\begin{equation*}}
\newcommand{\eeqs}{\end{equation*}}
\title{\vspace{-\baselineskip}\sffamily\bfseries A sharp upper bound for the $2$-torsion of class groups of multiquadratic fields}
\author[1]{Peter Koymans\thanks{Vivatsgasse 7, 53111 Bonn, Germany, koymans@mpim-bonn.mpg.de}}
\author[1]{Carlo Pagano\thanks{Vivatsgasse 7, 53111 Bonn, Germany, carlein90@gmail.com}}
\affil[1]{Max Planck Institute for Mathematics, Bonn}
\date{\today}
\begin{document}
\maketitle
	
\begin{abstract}
Let $K$ be a multiquadratic extension of $\Q$ and let $\text{Cl}^{+}(K)$ be its narrow class group. Recently, the authors \cite{KP} gave a bound for $|\text{Cl}^{+}(K)[2]|$ only in terms of the degree of $K$ and the number of ramifying primes. In the present work we show that this bound is sharp in a wide number of cases. Furthermore, we extend this to ray class groups.
\end{abstract}

\section{Introduction}
The class group is one of the most fundamental invariants of a number field $K$. Providing non-trivial upper bounds for the $l$-torsion of class groups in terms of the discriminant $\Delta_{K/\mathbb{Q}}$ of a general number field $K$ has been an active area of research with connections to elliptic curves and diophantine approximation \cite{BSTTTZ, EPW, FW, HV, Pierce, Pierce2, PTW, W}. 

For extensions $K/\mathbb{Q}$ of degree a power of a prime $l$ much more is known. For instance for $l=2$ and $K/\mathbb{Q}$ a quadratic extension, Gauss \cite{Gauss} showed that
$$
\text{dim}_{\mathbb{F}_2}\text{Cl}^{+}(K)[2]=\omega(\Delta_{K/\mathbb{Q}})-1. 
$$
Here $\text{Cl}^{+}(K)$ denotes the narrow class group of the field $K$ and $\omega(a)$ denotes the number of prime factors of a non-zero integer $a$. Recently, the authors \cite{KP} generalized Gauss' result to multiquadratic fields. More specifically, we obtained the following result, which is Theorem 1.1 of \cite{KP}. Call a vector $(a_1, \dots, a_n) \in \mathbb{Z}_{\geq 2}^n$ acceptable if the $a_i$ are squarefree, pairwise coprime and only have prime factors congruent to $1$ modulo $4$.

\begin{theorem} 
\label{tBound}
Let $n$ be a positive integer and let $(a_1, \dots, a_n) \in \mathbb{Z}_{\geq 2}^n$ be acceptable. Then we have
$$
\emph{dim}_{\mathbb{F}_2} \emph{Cl}^{+}(\mathbb{Q}(\sqrt{a_1}, \dots, \sqrt{a_n}))[2] \leq \omega(a_1 \cdot \ldots \cdot a_n) \cdot 2^{n - 1} - 2^n + 1. 
$$
\end{theorem}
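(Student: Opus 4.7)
The starting point is to reformulate $\mathrm{Cl}^+(K)[2]$ via Kummer theory: classes correspond to $[\alpha]\in K^\times/K^{\times 2}$ with $(\alpha)=I^2$ for some fractional ideal $I$ and $\alpha$ totally positive. A useful preliminary observation is that the stated bound equals $\sum_F \dim_{\mathbb{F}_2}\mathrm{Cl}^+(F)[2]$ where $F$ ranges over the $2^n-1$ quadratic subfields of $K$. Indeed, each such $F$ is $\mathbb{Q}(\sqrt{a_S})$ for a nonempty $S\subset\{1,\dots,n\}$ with $a_S=\prod_{i\in S}a_i$; the acceptability hypothesis forces $a_S\equiv 1\pmod 4$, so $\omega(\Delta_F)=\sum_{i\in S}\omega(a_i)$, and summing over $S$ yields
\[
\sum_{\emptyset\ne S\subset\{1,\dots,n\}}\Bigl(\sum_{i\in S}\omega(a_i)-1\Bigr)=2^{n-1}\omega(a_1\cdots a_n)-(2^n-1),
\]
which matches the stated bound via Gauss's formula $\dim\mathrm{Cl}^+(F)[2]=\omega(\Delta_F)-1$. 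This decomposition strongly suggests an inductive proof in which one strips off quadratic layers one at a time.

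My plan is to induct on $n$ using the tower $K_0=\mathbb{Q}\subset K_1\subset\cdots\subset K_n=K$, where $K_i=\mathbb{Q}(\sqrt{a_1},\dots,\sqrt{a_i})$, applying narrow genus theory at each quadratic step $K_i/K_{i-1}$ with Galois group $G_i=\langle\tau_i\rangle$. Write $A_i=\mathrm{Cl}^+(K_i)[2]$. The standard decomposition of $A_i$ as an $\mathbb{F}_2[G_i]$-module gives $\dim A_i=\dim A_i^{G_i}+\dim(1+\tau_i)A_i$. The second summand is bounded by $\dim A_{i-1}$ via the identity $(1+\tau_i)[J]=\iota_{K_i/K_{i-1}}\bigl(N_{K_i/K_{i-1}}([J])\bigr)$ and the observation that the norm sends $2$-torsion to $2$-torsion. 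Applying narrow genus theory to the quadratic extension $K_i/K_{i-1}$, combined with the acceptability hypothesis, should give $\dim A_i^{G_i}\le\dim A_{i-1}+t_i-1$, where $t_i$ is the number of primes of $K_{i-1}$ that ramify in $K_i$. Combining yields the recursion $\dim A_i\le 2\dim A_{i-1}+t_i-1$, which unrolls to $\dim A_n\le\sum_{i=1}^n 2^{n-i}(t_i-1)$. Since each rational prime $p\mid a_i$ contributes at most $[K_{i-1}:\mathbb{Q}]=2^{i-1}$ primes to $t_i$, one has $t_i\le 2^{i-1}\omega(a_i)$, and $\sum_{i=1}^n 2^{n-i}=2^n-1$, recovering the theorem.

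The main obstacle lies in the genus-theoretic step at the level of $\mathbb{F}_2$-dimensions. The classical Chevalley ambiguous class number formula computes only the order $|A_i^{G_i}|$, not its $2$-rank, and involves a unit-norm index $[\mathcal{O}^{\times,+}_{K_{i-1}}:\mathcal{O}^{\times,+}_{K_{i-1}}\cap N_{K_i/K_{i-1}}(K_i^\times)]$ whose $2$-part must be carefully controlled in order to bound the rank rather than just the $2$-adic valuation of the order. The natural approach is to write an explicit exact sequence describing ambiguous classes in terms of ramified primes, principal ambiguous ideals, and units of $K_{i-1}$, then analyze the $2$-rank directly via a Selmer-type calculation in $K_{i-1}^\times/K_{i-1}^{\times 2}$. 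The acceptability hypothesis enters decisively at this step: since every $a_i$ is odd with prime factors $\equiv 1\pmod 4$, each $K_i$ is totally real with $2$ unramified, so archimedean and dyadic ramification vanish, and the unit contributions can be absorbed into the ramification term $t_i-1$ rather than inflating the bound.
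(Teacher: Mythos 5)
The paper does not prove Theorem \ref{tBound} here; it is quoted from \cite{KP}, where the argument runs through expansion maps (cf.\ Theorem \ref{t:all expansions}): one shows that $\mathrm{Cl}^{+}(K)^{\vee}[2]$ is spanned by restrictions of certain continuous homomorphisms $G_{\mathbb{Q}} \to \mathbb{F}_2[\mathbb{F}_2^{A}]\rtimes\mathbb{F}_2^{A}$ and then counts these. Your route, peeling off quadratic layers $K_{i-1}\subset K_i$ and running narrow genus theory at each step, is genuinely different and would be more elementary if it worked. The scaffolding you set up is correct: the stated bound does equal $\sum_{F}\dim_{\mathbb{F}_2}\mathrm{Cl}^{+}(F)[2]$ over the quadratic subfields, the $\mathbb{F}_2[G_i]$-module identity $\dim A_i=\dim A_i^{G_i}+\dim(1+\tau_i)A_i$ holds, $(1+\tau_i)A_i\subseteq j(A_{i-1})$ because the norm preserves $2$-torsion, and $t_i\le 2^{i-1}\omega(a_i)$, so unrolling the recursion produces exactly the right total.

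The gap --- which you flag but do not close --- is the inequality $\dim_{\mathbb{F}_2} A_i^{G_i}\le\dim_{\mathbb{F}_2} A_{i-1}+t_i-1$, and it is not merely a technicality: as stated it is not clear that it is true. Chevalley's ambiguous class number formula bounds the \emph{order} $|\mathrm{Cl}^{+}(K_i)^{G_i}|$ above by $|\mathrm{Cl}^{+}(K_{i-1})|\cdot 2^{t_i-1}$, but this does not control the $\mathbb{F}_2$-rank of the $2$-torsion. Concretely: a class $[I]\in\mathrm{Cl}^{+}(K_{i-1})$ of order $4$ with $[I]^2$ in the capitulation kernel of $K_i/K_{i-1}$ has $j([I])\in A_i^{G_i}\setminus j(A_{i-1})$, so $\dim_{\mathbb{F}_2} A_i^{G_i}$ can exceed $\dim A_{i-1}+t_i-1$ while still respecting Chevalley's order bound (e.g.\ the $2$-part of $\mathrm{Cl}^{+}(K_{i-1})$ equal to $\mathbb{Z}/4\mathbb{Z}$, $t_i=1$, trivial norm index, and $\mathrm{Cl}^{+}(K_i)^{G_i}\cong(\mathbb{Z}/2\mathbb{Z})^2$ is consistent with the order formula yet violates your rank bound). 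Controlling this interaction of capitulation, $4$-ranks, and the index $[\mathcal{O}_{K_{i-1}}^{\times,+}:\mathcal{O}_{K_{i-1}}^{\times,+}\cap N_{K_i/K_{i-1}}K_i^{\times}]$ across the entire tower is precisely the content of the theorem, and saying the unit contributions ``can be absorbed into $t_i-1$'' describes what one would like to prove rather than proving it. Some rank-level substitute for Chevalley (reflection theory, a Selmer-group computation, or the expansion-map formalism actually used in \cite{KP}) is required to make the induction close.
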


A similar upper bound has subsequently been established by Kl\"uners and Wang in \cite[Theorem 2.1]{KW} for extensions $K/\mathbb{Q}$ of degree a power of $l$. However, when specialized to the multiquadratic fields considered above, their bound is in the worst case scenario twice as large as the one in Theorem \ref{tBound}. This work is devoted to showing that the bound in Theorem \ref{tBound} is sharp for every $n \in \mathbb{Z}_{\geq 1}$.

An acceptable vector $(a_1, \dots, a_n)$ is said to be \emph{maximal} in case the inequality of Theorem \ref{tBound} is an equality. Among other things, we have given a recursive characterization of maximal vectors (see \cite[Theorem 1.2]{KP}), which we reproduce now. Write $\pi_S$ for the projection on the coordinates in $S$, write $H_2^+(K)$ for the maximal multiquadratic unramified (at all finite places) extension of $K$ and write $[n] := \{1, \dots, n\}$.

\begin{theorem}
\label{tRecursive}
Let $n$ be a positive integer and let $(a_1, \dots, a_n)$ be an acceptable vector. Then the following are equivalent. \\
$(a)$ The vector $(a_1, \dots, a_n)$ is maximal, i.e. 
$$
\emph{dim}_{\mathbb{F}_2} \emph{Cl}^{+}(\mathbb{Q}(\sqrt{a_1}, \dots, \sqrt{a_n}))[2] = \omega(a_1 \cdot \ldots \cdot a_n) \cdot 2^{n - 1}-2^n+1.
$$
$(b)$ For every $j \in [n]$, the vector $\pi_{[n] - \{j\}}(a_1, \dots, a_n)$ is maximal and every prime divisor $p$ of $a_j$ splits completely in $H_2^{+}(\mathbb{Q}(\{\sqrt{a_m}\}_{m \in [n]-\{j\}}))$. \\
$(c)$ For every $j \in [n]$, the vector $\pi_{[n] - \{j\}}(a_1, \dots, a_n)$ is maximal and for every prime divisor $p$ of $a_j$, one (or equivalently any) prime above $p$ in the field $\mathbb{Q}(\{\sqrt{a_m}\}_{m \in [n]-\{j\}})$ belongs to  $2\emph{Cl}^{+}(\mathbb{Q}(\{\sqrt{a_m}\}_{m \in [n]-\{j\}}))$.  
\end{theorem}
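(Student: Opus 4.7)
The equivalence $(b)\Leftrightarrow(c)$ is a direct consequence of global class field theory applied to $H_2^{+}(K_j)$, where we write $K_j := \mathbb{Q}(\{\sqrt{a_m}\}_{m\in[n]-\{j\}})$. Artin reciprocity yields an isomorphism $\mathrm{Gal}(H_2^{+}(K_j)/K_j)\cong \mathrm{Cl}^{+}(K_j)/2\mathrm{Cl}^{+}(K_j)$ sending the Frobenius of an unramified prime $\mathfrak{p}$ to its class $[\mathfrak{p}]\bmod 2$. Since the $a_i$ are pairwise coprime, every prime $p\mid a_j$ is unramified in $K_j/\mathbb{Q}$, hence in $H_2^{+}(K_j)/K_j$; therefore complete splitting of $p$ in $H_2^{+}(K_j)$ is equivalent to $[\mathfrak{p}]\in 2\mathrm{Cl}^{+}(K_j)$ for some prime $\mathfrak{p}$ of $K_j$ above $p$. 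The "one (or equivalently any)" phrasing in $(c)$ is justified because $\mathrm{Gal}(K_j/\mathbb{Q})$ permutes the primes above $p$ transitively and preserves $2\mathrm{Cl}^{+}(K_j)$ setwise.

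For $(a)\Leftrightarrow(b)$ the plan is an induction on $n$. The base case $n=1$ is Gauss's theorem combined with the fact that $H_2^{+}(\mathbb{Q})=\mathbb{Q}$ (so the splitting condition is vacuous). For the inductive step, fix $j\in[n]$ and consider the quadratic extension $K/K_j$. Writing $B(a_1,\dots,a_n) := \omega(a_1\cdots a_n)2^{n-1}-2^n+1$ for the bound of Theorem~\ref{tBound}, the goal is to refine the proof of that theorem in \cite{KP} so as to obtain a comparison
$$
\dim_{\mathbb{F}_2}\mathrm{Cl}^{+}(K)[2]\;\leq\;\dim_{\mathbb{F}_2}\mathrm{Cl}^{+}(K_j)[2]\;+\;\bigl(B(a_1,\dots,a_n)-B(\pi_{[n]-\{j\}}(a_1,\dots,a_n))\bigr)\;-\;\varepsilon_j,
$$
where $\varepsilon_j\geq 0$ is a defect that vanishes precisely when every $p\mid a_j$ splits completely in $H_2^{+}(K_j)$. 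Granting this, if $K$ is maximal then equality is forced in both inequalities feeding into it: thus each $K_j$ is maximal (by the induction hypothesis and the upper bound of Theorem~\ref{tBound} applied to $K_j$) and $\varepsilon_j=0$, yielding $(a)\Rightarrow(b)$. Conversely, if $(b)$ holds for all $j$, combining maximality of each $K_j$ with $\varepsilon_j=0$ in the refined inequality pins $\dim_{\mathbb{F}_2}\mathrm{Cl}^{+}(K)[2]$ at the value $B(a_1,\dots,a_n)$, giving $(b)\Rightarrow(a)$.

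\textbf{Main obstacle.} The crux is constructing the refined inequality and identifying the defect $\varepsilon_j$ with the number of primes $p\mid a_j$ failing to split completely in $H_2^{+}(K_j)$, suitably weighted. I expect this to proceed via Chevalley's ambiguous class number formula (in its narrow analogue) for the cyclic quadratic extension $K/K_j$, describing the $\mathrm{Gal}(K/K_j)$-fixed part of $\mathrm{Cl}^{+}(K)$ in terms of $\mathrm{Cl}^{+}(K_j)$, the primes of $K_j$ ramified in $K$ (i.e.\ those above divisors of $a_j$), and the unit/sign signature. The contribution of each $p\mid a_j$ to this formula is governed by the class of $\mathfrak{p}\mid p$ in $\mathrm{Cl}^{+}(K_j)/2\mathrm{Cl}^{+}(K_j)$, which by the $(b)\Leftrightarrow(c)$ part is exactly the splitting condition. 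Passing from fixed classes to the $2$-torsion, and carefully tracking the signs (since $K$ is totally real), will be the delicate bookkeeping; the matching between the counted ambiguous contributions and the combinatorial quantity $B(a_1,\dots,a_n)-B(\pi_{[n]-\{j\}})$ is what makes the defect $\varepsilon_j$ the "right" obstruction.
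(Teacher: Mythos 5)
This theorem is not proved in the present paper; the authors cite it verbatim from \cite[Theorem 1.2]{KP}, so there is no in-paper argument to compare yours against. What one can glean from the surrounding material (Theorems \ref{t:all expansions} and \ref{tCommvect}, both lifted from \cite{KP}) is that the proof in \cite{KP} does not proceed through Chevalley's ambiguous class number formula at all, but through the formalism of \emph{expansion maps}: maximality is encoded as the existence of a full supply of expansion maps $\psi_{T,p}$, and the recursion over $j$ arises from the recursive structure of the cochains $\phi_S$ in equation (\ref{e2.1}). This buys something your proposed route does not readily give, namely an explicit list of generators of $\mathrm{Cl}^+(K)^{\vee}[2]$ rather than merely a dimension count, which is then exactly what is needed later for Theorem \ref{tSharp}.

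On the substance of your sketch, there are two concrete gaps. First, your claimed one-line deduction of $(b)\Leftrightarrow(c)$ is not correct as written: complete splitting of $p$ in $H_2^{+}(K_j)$ over $\mathbb{Q}$ requires, in particular, that $p$ already splits completely in $K_j/\mathbb{Q}$, and the condition in $(c)$ — that some prime $\mathfrak{p}$ of $K_j$ above $p$ lies in $2\mathrm{Cl}^{+}(K_j)$ — only controls the passage from $K_j$ up to $H_2^{+}(K_j)$, not the passage from $\mathbb{Q}$ up to $K_j$. (For instance, if $p$ were inert in some quadratic subfield of $K_j$, the condition in $(c)$ on the unique prime above $p$ could still hold while $(b)$ fails.) You need to explain where the splitting of $p$ in $K_j/\mathbb{Q}$ comes from, presumably by first establishing $(a)\Leftrightarrow$ (strong quadratic consistency plus $\dots$) and routing the equivalence through $(a)$ rather than relating $(b)$ and $(c)$ head-on. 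Second, the $(a)\Leftrightarrow(b)$ direction is explicitly presented as a plan rather than a proof: you acknowledge that "the crux is constructing the refined inequality and identifying the defect $\varepsilon_j$," but this refined inequality is the entire content of the theorem. Chevalley's formula gives the Galois-invariant part $\mathrm{Cl}^{+}(K)^{\mathrm{Gal}(K/K_j)}$, and passing from that to the $2$-torsion of $\mathrm{Cl}^{+}(K)$ requires controlling the full descent and, in the narrow setting, the image of the norm map on totally positive units — none of which is routine for a multiquadratic tower. Without an actual construction of $\varepsilon_j$ and a proof that it vanishes exactly under the splitting condition, the sketch does not constitute a proof.
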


In particular Theorem \ref{tRecursive} recovers the equality of Gauss' theorem for $n=1$ as a special case. It is then natural to ask whether for every positive integer $n$ one can find maximal vectors of dimension $n$. As the reader can sense from the characterization given in Theorem \ref{tRecursive}, it is not at all obvious how to do this. A naive inductive approach based on the Chebotarev Density Theorem runs into severe difficulties, since one needs to simultaneously guarantee splitting of a prime $p$ in a field $K_q$ depending on $q$ and of $q$ in a field $K_p$ depending on $p$.

To circumvent this problem, we use combinatorial ideas from \cite{Smith}, which we explain here from first principles in order to make the present work self-contained (see Section \ref{Additive Systems}). Our main theorem shows that one can find maximal vectors $(a_1, \ldots, a_n)$ for every $n$. Moreover, for any fixed $n$, we show that Theorem \ref{tBound} is sharp for a wide number of choices of $(\omega(a_1), \dots, \omega(a_n))$. More precisely, we establish the following.

\begin{theorem} 
\label{tSharp}
$(a)$ Take $n \in \Z_{> 3}$ and take $(k_1, \dots, k_n) \in \mathbb{Z}_{\geq 1} \times (2 \cdot \mathbb{Z}_{\geq 1})^{n-1}$. Then there are infinitely many acceptable vectors $(a_1, \ldots, a_n)$ with $\omega(a_i) = k_i$ for each $i \in [n]$ and
$$
\emph{dim}_{\mathbb{F}_2} \emph{Cl}^{+}(\mathbb{Q}(\sqrt{a_1}, \dots, \sqrt{a_n}))[2] = \omega(a_1 \cdot \ldots \cdot a_n) \cdot 2^{n - 1} - 2^n + 1. 
$$
$(b)$ Take $(k_1,k_2,k_3) \in \mathbb{Z}_{\geq 1}^3$. Then there are infinitely many acceptable vectors $(a_1, a_2, a_3)$ with $\omega(a_i) = k_i$ for each $i \in \{1, 2, 3\}$ and
$$
\emph{dim}_{\mathbb{F}_2} \emph{Cl}^{+}(\mathbb{Q}(\sqrt{a_1}, \sqrt{a_2}, \sqrt{a_3}))[2] = \omega(a_1a_2a_3) \cdot 4 - 7. 
$$
\end{theorem}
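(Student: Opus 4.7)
The plan is to use the recursive characterization of maximality from Theorem \ref{tRecursive}(c): it suffices to produce acceptable vectors $(a_1, \ldots, a_n)$ with the prescribed numbers of prime factors such that, for every $j \in [n]$, the deleted-coordinate vector $\pi_{[n]-\{j\}}(a_1, \ldots, a_n)$ is again maximal, and every prime $p \mid a_j$ splits in $H_2^+(\mathbb{Q}(\{\sqrt{a_m}\}_{m \neq j}))$. The core difficulty, flagged in the introduction, is that these splitting conditions are mutually entangled: the field $H_2^+(\mathbb{Q}(\{\sqrt{a_m}\}_{m \neq j}))$ itself depends on every $a_m$ with $m \neq j$, so one cannot fix the coordinates one at a time and apply Chebotarev naively to produce the primes in $a_j$.

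To circumvent this, I would construct the prime factors of all the $a_i$ simultaneously via an \emph{additive system}, developed in Section \ref{Additive Systems} in the style of Smith \cite{Smith}. Primes are adjoined to the $a_i$ in controlled rounds; at each round, having partially specified the vector, one identifies the finite list of Galois extensions in which the Artin symbols of newly adjoined primes must be constrained so as to preserve (and eventually upgrade) the maximality of the relevant subtuples, and then invokes Chebotarev's theorem to find simultaneous solutions. The additive-system bookkeeping tracks the compatibility between the conditions imposed in the various $H_2^+(\mathbb{Q}(\{\sqrt{a_m}\}_{m \neq j}))$, so that adding a new prime to some $a_i$ cannot undo conditions already arranged for primes in $a_j$ with $j \neq i$.

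For part (b), I would treat the case $n=3$ separately as a base case. Here there are only three two-element subtuples to control, and one can produce the vector by a direct construction in a finite tower of ray class fields, without imposing any parity hypothesis on $(k_1,k_2,k_3)$. For part (a), the general $n>3$ case is then reduced to the base by an induction in which primes are adjoined to $a_2, \ldots, a_n$ \emph{in pairs}. The evenness hypothesis on $k_2, \ldots, k_n$ is precisely what makes this pairing feasible: it annihilates the obstruction that arises when a single new prime is added to one coordinate, since adjoining a lone prime to $a_i$ changes the ray class fields governing the splitting conditions for all the other coordinates in a manner that cannot in general be compensated.

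The main obstacle will be to verify that the vector produced by iterated Chebotarev through the additive system actually satisfies clause (c) of Theorem \ref{tRecursive} for \emph{every} $j \in [n]$, and not only for the coordinate adjoined last. Orchestrating the rounds so that the accumulated conditions remain simultaneously satisfiable, and keeping quantitative control over how the fields $H_2^+(\mathbb{Q}(\{\sqrt{a_m}\}_{m \neq j}))$ grow at every step, is the technical heart of the argument.
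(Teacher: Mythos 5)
Your proposal correctly identifies several of the organizing ideas: that Smith-style additive systems are the right framework, that the evenness of $k_2, \ldots, k_n$ is exploited by adjoining primes in pairs (so that a cube in $X_i^2$ carries $k_i$ prime factors), and that the case $n = 3$ plays the role of a base. But the mechanism you describe --- ``controlled rounds'' of Chebotarev in which the Artin symbols of each newly adjoined prime are constrained in extensions depending on the primes chosen so far --- is exactly the naive iterative approach that the introduction says runs into ``severe difficulties.'' If you add a prime $p$ to $a_j$ after having fixed the other coordinates, maximality still requires every previously chosen prime $q \mid a_i$ ($i \neq j$) to split in a field that now depends on $p$; that is not a Chebotarev condition on $p$ alone, and no amount of bookkeeping lets you satisfy it one round at a time. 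You flag this obstacle at the end of your proposal, but you do not explain how it is overcome, and that explanation is the whole proof.

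Two concrete ingredients are missing. First, Chebotarev is invoked only once, to build a large base space of primes (Proposition~\ref{prop: a Redei-started space}); the construction succeeds because of R\'edei reciprocity (Theorem~\ref{prop: redei reciprocity}), which converts the one-sided splitting condition produced by Chebotarev (a fresh prime $x$ splits in $L(\psi_{p;q})$) into the two-sided conditions that maximality requires ($p$ splits in $L(\psi_{q;x})$ and $q$ splits in $L(\psi_{p;x})$). This reciprocity, together with the additivity of expansion maps (Proposition~\ref{prop: additivity}), is also what handles part $(b)$ for arbitrary $(k_1,k_2,k_3) \in \Z_{\geq 1}^3$: for $n = 3$ only subtuples of size $\leq 3$ occur and the R\'edei-started space controls them directly, so no parity hypothesis and no additive-system counting are needed; it is not a ``direct construction in a tower of ray class fields.'' Second, after the base space is fixed, no further Chebotarev is applied. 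The passage from maximality of small subtuples to the existence of maximal $n$-tuples is a purely combinatorial density statement, Proposition~\ref{shrinking lemma}: a Cauchy--Schwarz and equivalence-class argument over the product space shows that a positive proportion of cubes lie in $C_{[n]-\{1\}}^{\textup{acc}}$, and each such cube with distinct entries produces a maximal vector with $\omega(a_i) = k_i$. Without R\'edei reciprocity to seed the base space and without the shrinking lemma to replace iterated Chebotarev, the additive system is merely notation and does not by itself resolve the entanglement you correctly diagnose.
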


We speculate that the condition $(k_1, \dots, k_n) \in \mathbb{Z}_{\geq 1} \times (2 \cdot \mathbb{Z}_{\geq 1})^{n-1}$ can also be removed for $n>3$, but this seems to be out of reach with the techniques employed in this work. We next turn our attention to ray class groups. First of all, let us notice that the $2$-torsion of the ordinary class group of a number field $K$ can not be larger than the $2$-torsion of the narrow class group of $K$. Hence the upper bound in Theorem \ref{tBound} is also an upper bound for $|\text{Cl}(\mathbb{Q}(\sqrt{a_1}, \dots, \sqrt{a_n}))[2]|$. Less obvious is whether also this bound is sharp. 

Similarly, fix an integer $c$, which we take in this paper to be a squarefree product of primes congruent to $1$ modulo $4$ (see the end of this introduction for some motivation on this assumption). Then one obtains from Theorem \ref{tBound} and the ray class group sequence
$$
\text{dim}_{\mathbb{F}_2}\text{Cl}(\mathbb{Q}(\sqrt{a_1}, \dots, \sqrt{a_n}),c)[2] \leq \omega(a_1 \cdot \ldots \cdot a_n) \cdot 2^{n - 1} - 2^n + 1+2^n \cdot\omega(c),
$$
where the bound can be reached only if all the prime divisors of $c$ split completely in $\mathbb{Q}(\sqrt{a_1}, \ldots, \sqrt{a_n})$. It is, once more, not obvious whether this bound is sharp. Our next theorem settles these questions.  

\begin{theorem} 
\label{tSharp2}
Take $n \in \Z_{\geq 1}$ and take $(k_1, \dots, k_n) \in (2 \cdot \mathbb{Z}_{\geq 1})^{n}$. Let $c$ be a squarefree integer divisible only by primes congruent to $1$ modulo $4$. Then there are infinitely many acceptable vectors $(a_1, \ldots, a_n)$ with $\omega(a_i) = k_i$ for each $i \in [n]$ and
$$
\emph{dim}_{\mathbb{F}_2}\emph{Cl}(\mathbb{Q}(\sqrt{a_1}, \dots, \sqrt{a_n}),c)[2]= \omega(a_1 \cdot \ldots \cdot a_n) \cdot 2^{n - 1} - 2^n + 1+2^n \cdot\omega(c).
$$
\end{theorem}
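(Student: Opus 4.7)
My strategy is to combine the additive systems construction of Theorem \ref{tSharp} with additional splitting and unit-sign conditions arising from the ray class group exact sequence.

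\textbf{Step 1 (Upper bound and equality conditions).} First I would derive the upper bound from the ray class sequence
\[
\O_K^\times \longrightarrow (\O_K/c\O_K)^\times \longrightarrow \text{Cl}(K,c) \longrightarrow \text{Cl}(K) \longrightarrow 0
\]
(with $K = \Q(\sqrt{a_1},\ldots,\sqrt{a_n})$), Theorem \ref{tBound} applied via the surjection $\text{Cl}^+(K) \twoheadrightarrow \text{Cl}(K)$, and the estimate $\dim_{\FF_2}(\O_K/c\O_K)^\times[2] \leq 2^n\omega(c)$. A snake lemma analysis for multiplication by $2$ then reveals that equality in the statement forces simultaneously: (a) $(a_1,\ldots,a_n)$ is maximal in the sense of Theorem \ref{tRecursive}; (b) $\dim \text{Cl}(K)[2]=\dim \text{Cl}^+(K)[2]$, i.e., the sign map $\O_K^\times \to \{\pm 1\}^{2^n}$ is surjective; (c) every prime dividing $c$ splits completely in $K$; (d) the image of $\O_K^\times$ in $(\O_K/c\O_K)^\times$ is contained in the squares; and (e) the connecting homomorphism $\text{Cl}(K)[2] \to (\O_K/c\O_K)^\times / (U \cdot \text{squares})$, with $U$ the image of $\O_K^\times$, vanishes. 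Each of (a)--(e) is a splitting condition on the prime divisors of the $a_i$ in a finite Galois extension of $\Q$.

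\textbf{Step 2 (Enhanced additive system).} Next I would adapt the additive systems framework of Section \ref{Additive Systems} used for Theorem \ref{tSharp}, augmenting it with new targets encoding (b)--(e). Conditions (c) and (d) amount to congruence conditions on the primes dividing the $a_j$'s modulo $4c$ that govern their splitting in $\Q(\zeta_{4c},\sqrt{a_1},\ldots,\sqrt{a_n})$; conditions (b) and (e) are R\'edei-type, expressible as splitting conditions in the $4$-Hilbert class fields (or variants involving $c$) of the quadratic and multiquadratic subfields of $K$. The assumption that all $k_i$ are even is exactly what allows these parity-type conditions to be met without forcing a contradiction inside the additive system. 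Combined with the solutions produced by Theorem \ref{tSharp}(a) for $n \geq 4$, Theorem \ref{tSharp}(b) for $n = 3$, and a direct genus-theoretic argument for $n = 1, 2$, this should yield infinitely many acceptable vectors satisfying (a)--(e).

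\textbf{The hard part.} The main difficulty lies in Step 2: I would need to show that the new splitting and sign conditions can be imposed \emph{on top of} the maximality conditions without introducing a parity obstruction. Concretely, the Frobenius classes controlling $c$-splitting, unit norms, and units-as-squares-mod-$c$ should be linearly independent, in the sense of Smith's framework, from the Frobenius classes enforcing the recursive structure of Theorem \ref{tRecursive}. The parity hypothesis $k_i \in 2\Z_{\geq 1}$ is what makes this independence work, and is presumably the reason it is imposed for all $n$, including $n=1,2$, which were not addressed by Theorem \ref{tSharp}.
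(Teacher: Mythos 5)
Your overall strategy---re-use the additive systems machinery and add extra splitting/sign conditions on top of maximality---is in the right spirit, but it misses the key device the paper uses, which makes most of your conditions (b)--(e) come for free and avoids the independence question you flag as ``the hard part.''

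The paper's trick is to bump up the dimension rather than add new constraints. Recall that in the proof of Theorem~\ref{tSharp}(a) the first coordinate $y_1 \in Y_1$ was an arbitrary fixed element with $\omega(y_1)=k_1$. The paper simply takes $y_1 := c$, so that the additive system now produces maximal $(n+1)$-dimensional vectors $(c,a_1,\ldots,a_n)$. Maximality of this extended vector, via Theorems~\ref{tRecursive} and \ref{t:all expansions}, immediately gives: $(a_1,\ldots,a_n)$ is itself maximal; every prime $l\mid c$ splits completely in $H_2^+(\Q(\sqrt{a_1},\ldots,\sqrt{a_n}))$ (and a fortiori in $\Q(\sqrt{a_1},\ldots,\sqrt{a_n})$); and there exist expansion maps $\psi_{a_1,\ldots,a_n;l}$ for each prime $l\mid c$. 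The $1$-cochains $\phi_T(\psi_{a_1,\ldots,a_n;l}(\mathfrak{G}))$ for $T\subseteq[n]$, restricted to $\mathrm{Gal}(\cdot/\Q(\sqrt{a_1},\ldots,\sqrt{a_n}))$, are precisely the $2^n\omega(c)$ extra characters of $\mathrm{Cl}(K,c)^{\vee}[2]$; they are linearly disjoint from $\mathrm{Cl}(K)^{\vee}[2]$ by ramification at $c$. So the equality is proved by an explicit character count, not by running the exact sequence forwards and verifying each of your (b)--(e) separately. This also transparently explains why the hypothesis is $(k_1,\ldots,k_n)\in(2\Z_{\geq 1})^n$: in the $(n+1)$-dimensional setup, your $k_1,\ldots,k_n$ occupy the positions $2,\ldots,n+1$, which must all be even, while the arbitrary first slot is used up by $\omega(c)$.

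The only genuinely new modification to the additive system, corresponding roughly to your condition (b), is forcing all expansion maps to have totally real field of definition (so that the extra characters live in the ordinary rather than narrow ray class group). The paper handles this by enlarging $A_S$ with coordinates for the infinite place and by replacing R\'edei reciprocity with Stevenhagen's version \cite{Stevenhagen} that incorporates $-1$. By contrast, your Step~2 proposes to encode (b)--(e) directly as new targets and then argue the resulting Frobenius conditions are ``linearly independent''; this is exactly the delicate step the paper avoids. In particular, your plan would require showing that each of (b)--(e) is expressible as an $\FF_2$-linear constraint compatible with the additivity relation~(\ref{eAdditivity}), and that the combined density stays positive---neither is obvious, and (e), the vanishing of the connecting map, is not even clearly a ``cube-additive'' condition. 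The paper's $(n+1)$-dimensional detour is both cleaner and, as written, actually closes the argument; your plan, while pointing at the right ingredients, leaves the crux unsolved.
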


As a corollary of Theorem \ref{tSharp2} we obtain the following result on unit groups. 

\begin{corollary}
\label{cUnits}
Let $n \in \Z_{\geq 1}$. Let $c$ be a squarefree integers with all factors congruent to $1$ modulo $4$. Then there exist infinitely many acceptable vectors $(a_1, \ldots, a_n)$ such that all prime divisors of $c$ split completely in $\mathbb{Q}(\sqrt{a_1}, \ldots, \sqrt{a_n})$ and the unit group $\mathcal{O}_{\mathbb{Q}(\sqrt{a_1}, \ldots, \sqrt{a_n})}^{*}$ reduced modulo $c$ is entirely contained in the group
$$
\left(\frac{\mathcal{O}_{\mathbb{Q}(\sqrt{a_1}, \ldots, \sqrt{a_n})}}{c}\right)^{*2}.
$$
\end{corollary}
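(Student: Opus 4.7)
The plan is to deduce Corollary \ref{cUnits} as a direct consequence of Theorem \ref{tSharp2} by unpacking the chain of inequalities that produced the bound
\begin{equation*}
\dim_{\mathbb{F}_2}\text{Cl}(K,c)[2] \leq \omega(a_1\cdots a_n)\cdot 2^{n-1} - 2^n + 1 + 2^n\omega(c)
\end{equation*}
stated just before Theorem \ref{tSharp2}, where $K := \mathbb{Q}(\sqrt{a_1},\dots,\sqrt{a_n})$.

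First I would recall how the bound is obtained. The narrow ray class group exact sequence
\begin{equation*}
\mathcal{O}_K^* \xrightarrow{\phi} (\mathcal{O}_K/c)^* \to \text{Cl}(K,c) \to \text{Cl}^{+}(K) \to 0
\end{equation*}
gives a short exact sequence $0 \to A \to \text{Cl}(K,c) \to \text{Cl}^{+}(K) \to 0$ with $A := (\mathcal{O}_K/c)^*/\phi(\mathcal{O}_K^*)$. Using $\dim_{\mathbb{F}_2} H[2] = \dim_{\mathbb{F}_2} H/2H$ for finite abelian $H$, one obtains
\begin{equation*}
\dim_{\mathbb{F}_2}\text{Cl}(K,c)[2] \leq \dim_{\mathbb{F}_2} A/2A + \dim_{\mathbb{F}_2}\text{Cl}^{+}(K)[2].
\end{equation*}
Since $c$ is odd and squarefree, and $A/2A = (\mathcal{O}_K/c)^*/(\phi(\mathcal{O}_K^*)(\mathcal{O}_K/c)^{*2})$, I would bound
\begin{equation*}
\dim_{\mathbb{F}_2} A/2A \leq \dim_{\mathbb{F}_2}\frac{(\mathcal{O}_K/c)^*}{(\mathcal{O}_K/c)^{*2}} = \sum_{p \mid c} g_K(p) \leq 2^n\omega(c),
\end{equation*}
where $g_K(p)$ is the number of primes of $K$ above $p$. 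The first of these two inequalities is an equality exactly when $\phi(\mathcal{O}_K^*) \subseteq (\mathcal{O}_K/c)^{*2}$, and the second is an equality exactly when every prime divisor of $c$ splits completely in $K$. Combined with Theorem \ref{tBound}, this reproduces the displayed bound.

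The last step is to apply Theorem \ref{tSharp2}, say with $k_i = 2$ for every $i \in [n]$, which produces infinitely many acceptable vectors attaining the bound. For each such vector all the intermediate inequalities above must be equalities, so the two sharpening conditions just highlighted hold simultaneously; these are exactly the two assertions of the corollary. There is no real obstacle here: Theorem \ref{tSharp2} carries all the difficult content, and the argument is a bookkeeping exercise on the ray class sequence. The only point of care is to stick to the narrow (rather than ordinary) ray class group convention implicit in Theorem \ref{tSharp2}, and to note that the odd squarefree condition on $c$ is what makes each local factor $(\mathcal{O}_K/\mathfrak{p})^*$ contribute exactly one dimension to the $\mathbb{F}_2$-cotorsion.
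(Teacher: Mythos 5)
Your proof follows the same strategy as the paper: unpack the ray class group exact sequence, observe that each inequality in the chain must be an equality when the bound of Theorem \ref{tSharp2} is attained, and read off the two conditions of the corollary from those equality cases. This is precisely what the paper does, with the only cosmetic difference that the paper exhibits a surjection $(\mathcal{O}_K/c)^*/(\mathcal{O}_K/c)^{*2} \twoheadrightarrow A/2A$ and concludes it is an isomorphism by dimension count, which is equivalent to your observation that $\dim_{\mathbb{F}_2} A/2A \leq \dim_{\mathbb{F}_2} (\mathcal{O}_K/c)^*/(\mathcal{O}_K/c)^{*2}$ must be an equality. One small imprecision to flag: the paper's $\text{Cl}(K,c)$ sits in the sequence $0 \to (\mathcal{O}_K/c)^*/\mathcal{O}_K^* \to \text{Cl}(K,c) \to \text{Cl}(K) \to 0$, with the \emph{ordinary} class group on the right, not $\text{Cl}^{+}(K)$ as in your displayed sequence; you should then use $\dim_{\mathbb{F}_2}\text{Cl}(K)[2] \leq \dim_{\mathbb{F}_2}\text{Cl}^{+}(K)[2]$ together with Theorem \ref{tBound} to pass to the stated bound. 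This does not affect your conclusion, since tightness forces every inequality in the chain to be an equality either way.
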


We remark that, in the context of Corollary \ref{cUnits}, it is no real loss of generality to demand that all the prime divisors of $c$ are $1$ modulo $4$. Indeed, we are aiming to construct multiquadratic extensions splitting completely at all prime divisors of $c$ and whose unit group consists entirely of squares modulo $c$. This then in particular applies to $-1$, which is then a square in $\mathbb{F}_l$ for every $l \mid c$ so that $l \equiv 1 \bmod 4$. We similarly remark that the bound for the ordinary class group in Theorem \ref{tSharp2} (i.e. the case $c = 1$) is not sharp, whenever one of the $a_i$ is divisible by a prime congruent to $3$ modulo $4$.

\section*{Acknowledgments}
We thank Alexander Smith for several clarifying emails and conversations about his work. We are grateful to the anonymous referee of \cite{KP} for having encouraged us to write down a self-contained proof of Theorem \ref{tSharp} in the form of this independent work. The authors wish to thank the Max Planck Institute for Mathematics in Bonn for its financial support, great work conditions and an inspiring atmosphere. 

\section{Additive systems} 
\label{Additive Systems}
For completeness we include a self-contained proof of \cite[Proposition 3.1]{Smith}; we claim no originality in this section. 

We let $X_1, \dots, X_d$ be arbitrary non-empty finite sets and put $X := X_1 \times \dots \times X_d$. In our application the sets $X_i$ will consist of acceptable integers $a_i$ with $\omega(a_i) = k_i/2$. A cube $C$ is a product set $Y_1 \times \dots \times Y_d$ with $Y_i \subseteq X_i$ and $|Y_i| = 2$, in our application we can think of $C$ as an acceptable vector $(a_1, \dots, a_d)$ with $\omega(a_i) = k_i$. It is here that we make essential use that $k_i$ is even. As we see in our next section, we need to find cubes $C$ satisfying certain bilinear conditions. The aim of our next definition is to encapsulate this in an abstract framework. 

We write $X_i^2$ for the set $X_i \times X_i$. For $S \subseteq [d]$ and $i \in [d]$, $\pi_i$ denotes the natural projection from $\prod_{i \in S} X_i^2 \times \prod_{i \not \in S} X_i$ to $X_i^2$ if $i \in S$ and to $X_i$ if $i \not \in S$, while $\text{pr}_1$ and $\text{pr}_2$ denote the natural projections from $X_i^2$ to its two factors.

\begin{definition} 
\label{dAdditive}
Let $X_1, \dots, X_d$ be arbitrary non-empty finite sets and put $X := X_1 \times \dots \times X_d$. An additive system $\mathfrak{A}$ on $X$ is given by a tuple $(C_S, C_S^{\textup{acc}}, F_S, A_S)$ indexed by subsets $S \subseteq [d]$ satisfying the following properties
\begin{itemize}
\item $C_S^{\textup{acc}} \subseteq C_S \subseteq \prod_{i \in S} X_i^2 \times \prod_{i \not \in S} X_i$ are sets, $F_S: C_S \rightarrow A_S$ is a map and $A_S$ is a finite $\mathbb{F}_2$-vector space;
\item we have that 
$$
C_S^{\textup{acc}} := \left\{x \in C_S : F_S(x) = 0\right\}
$$
and for $S \neq \emptyset$
\begin{align*}
&C_S := \{x \in \prod_{i \in S} X_i^2 \times \prod_{i \not \in S} X_i : \textup{ for all } j \in S \textup{ and all } y \in \prod_{i \in S - \{j\}} X_i^2 \times \prod_{i \in [d] - (S - \{j\})} X_i \\
&\textup{ satisfying } \pi_k(x) = \pi_k(y) \textup{ for } k \in [d] - \{j\} \textup{ and } \pi_j(y) \in \{\textup{pr}_1(\pi_j(x)), \textup{pr}_2(\pi_j(x))\}, \\ \\
& \quad \quad \quad \quad \quad \quad \quad \quad \quad \quad \textup{ we have } y \in C_{S - \{j\}}^{\textup{acc}}\};
\end{align*}
\item suppose that $x_1, x_2, x_3 \in C_S$ and suppose that there exists $j \in S$ such that
\[
\pi_k(x_1) = \pi_k(x_2) = \pi_k(x_3) \textup{ for all } k \in [d] - \{j\}
\]
and
\[
\pi_j(x_1) = (a, b), \quad \pi_j(x_2) = (b, c), \quad \pi_j(x_3) = (a, c) \textup{ for some } a, b, c \in X_j.
\]
Then we have
\begin{align}
\label{eAdditivity}
F_S(x_1) + F_S(x_2) = F_S(x_3).
\end{align}
\end{itemize}
\end{definition}

Note that we do not quite work with cubes in the above definition, but instead with elements of $X_1 \times X_1 \times \dots \times X_d \times X_d$. The major difference is that we have also included elements with equal coordinates. This will be very convenient in the proof of our next counting result for $C_S^{\text{acc}}$. Later, we shall need to remove such elements, but it is not hard to show that they contribute a vanishingly small proportion.

\begin{proposition} 
\label{shrinking lemma}
Let $X_1, \dots, X_d$ be arbitrary non-empty finite sets and put $X := X_1 \times \dots \times X_d$. Let $\mathfrak{A}$ be an additive system on $X$ such that $|A_S| \leq a$ for all $S \subseteq [d]$ and write $\delta$ for the density of $C_\emptyset^{\textup{acc}}$ in $X$. Then we have that
\[
\frac{|C_{[d]}^{\textup{acc}}|}{\prod_{i \in [d]} |X_i^2|} \geq \delta^{2^d} \cdot a^{-3^d}.
\]
\end{proposition}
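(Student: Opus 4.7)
The plan is a two-step bound: a Gowers/box-norm count for the "all corners lie in $C_\emptyset^{\textup{acc}}$" event, followed by a sequential pigeonhole for the remaining face conditions $F_T(x_\sigma)=0$, powered by axiom \eqref{eAdditivity}.

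First, I would unroll the recursive definition in Definition \ref{dAdditive}: an easy induction on $|S|$ shows that $x\in C_S^{\textup{acc}}$ if and only if, for every $T\subseteq S$ and every restriction pattern $\sigma\in\{1,2\}^{S\setminus T}$, the corresponding projection $x_\sigma$ (an element with $|T|$ pair-coordinates and the remaining coordinates replaced by a single value) lies in $C_T^{\textup{acc}}$. Taking $S=[d]$, this splits the condition $x\in C_{[d]}^{\textup{acc}}$ into $2^d$ corner conditions $x_\sigma\in C_\emptyset^{\textup{acc}}$ (the case $T=\emptyset$) and $\sum_{\emptyset\ne T\subseteq[d]}2^{d-|T|}=3^d-2^d$ face conditions $F_T(x_\sigma)=0$; the recursive structure then guarantees that as soon as the corner conditions and all face conditions with $|U|<|T|$ hold, we automatically have $x_\sigma\in C_T$, so $F_T(x_\sigma)\in A_T$ is well-defined at every subsequent step.

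Next I would count the $x\in\prod_i X_i^2$ satisfying the corner condition. Writing $f:X\to\{0,1\}$ for the indicator of $C_\emptyset^{\textup{acc}}$, so that $\sum_{x\in X}f(x)=\delta|X|$, the corner count equals $N:=\sum_x\prod_{\sigma\in\{1,2\}^{[d]}}f(x_\sigma)$, and iterated Cauchy--Schwarz (one application per coordinate, the standard box-norm step) yields
\[
N \;\geq\; \frac{(\sum_{x\in X}f(x))^{2^d}}{|X|^{2^d-2}} \;=\; \delta^{2^d}\prod_{i=1}^d|X_i|^2,
\]
i.e.\ the corner condition has density $\geq\delta^{2^d}$ in $\prod_iX_i^2$. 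I would then impose the $3^d-2^d$ face conditions one at a time, ordered by increasing $|T|$. Axiom \eqref{eAdditivity} (together with its consequences, the diagonal vanishing $F_T((a,a),y)=0$ and symmetry $F_T((a,b),y)=F_T((b,a),y)$) forces $F_T$, as a function of any single pair-coordinate $j\in T$ with the remaining coordinates fixed, to factor as $(a,b)\mapsto g(a)+g(b)$ for a suitable $g:X_j\to A_T$ (take $g(a):=F_T((a,b_0),y)$ for any fixed $b_0$). Cauchy--Schwarz on the fibres of $g$ over a symmetric product $V\times V$ of admissible pairs then shows that the condition $F_T=0$ is satisfied on a proportion at least $1/|A_T|\geq 1/a$. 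Compounding the $3^d-2^d$ losses yields $|C_{[d]}^{\textup{acc}}|/\prod_i|X_i^2|\geq\delta^{2^d}a^{-(3^d-2^d)}\geq\delta^{2^d}a^{-3^d}$, as required.

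The main obstacle is this last pigeonhole step: a generic map into $A_T$ need not take the value $0$ on a $1/|A_T|$ proportion of its domain, and worse, after imposing earlier face conditions $F_U=0$ with $j\in U\subsetneq T$ the set of admissible pairs along direction $j$ is not automatically a symmetric product, so the clean $1/|A_T|$ Cauchy--Schwarz bound is not immediate. Axiom \eqref{eAdditivity} is exactly what rescues us, by supplying the coboundary factorisation $F_T=g(a)+g(b)$ along each pair-direction; the delicate bookkeeping of choosing both the ordering of the face conditions and, within each step, the pair-direction $j$, so as to preserve enough symmetry of the admissible set at every stage, is the technical heart of the argument.
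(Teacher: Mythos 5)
Your two‐phase organization (iterated Cauchy--Schwarz for the $2^d$ corner conditions, then sequential pigeonhole for the $3^d-2^d$ face conditions) is a genuinely different decomposition from the paper's, which instead inducts on the dimension $d$. Phase one is fine: the box-norm inequality you invoke is standard and gives exactly the $\delta^{2^d}$ factor, and your coboundary factorization $F_T((a,b),y)=g(a)+g(b)$ along each pair-direction does follow from \eqref{eAdditivity}: fixing $b_0$ and setting $g(a):=F_T((a,b_0),y)$, symmetry gives $g(b)=F_T((b_0,b),y)$, and the additivity axiom applied to the triple $(a,b_0),(b_0,b),(a,b)$ yields $g(a)+g(b)=F_T((a,b),y)$. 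Your loss count $3^d-2^d$ is also, if anything, sharper than the stated $3^d$; tracing the paper's recursion one finds the true exponent is indeed $3^d-2^d$, so your bookkeeping matches what is actually going on.

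The problem is that phase two is not a proof, and you say so yourself: the ``delicate bookkeeping \ldots is the technical heart of the argument,'' and that heart is left out. The naive ``each face condition costs at most a factor $1/|A_T|$'' is false for an arbitrary map into $A_T$ (its zero fibre can be empty), and the coboundary factorization only rescues you if, at the moment you impose $F_T=0$, the set of still-admissible pairs along the chosen direction $j$ is a symmetric product $V\times V$ (or a disjoint union of such squares), because only then does the Cauchy--Schwarz count $|\{(a,b)\in V^2: g(a)=g(b)\}|\geq|V|^2/|A_T|$ apply. After imposing earlier face conditions involving $j$, there is no a priori reason for this set to stay a symmetric product, and you do not supply an ordering of the face conditions and pair-directions that preserves it. This is precisely where the paper's inductive organization earns its keep: by peeling off one coordinate at a time, the paper observes that $W(x)$ (the set of pairs $(a,b)$ with $(x,(a,b))\in C_{[d]}^{\textup{acc}}$) is an \emph{equivalence relation} on $V(x)$---reflexivity, symmetry and transitivity are exactly the three specializations of \eqref{eAdditivity} you list---so that $W(x)$ is automatically a disjoint union of squares $V_r\times V_r$. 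It then bounds the number $R$ of equivalence classes by a nested chain of refinements $\sim_S$, one for each $\{d\}\subseteq S\subseteq[d]$, using injectivity of $b_i\mapsto F_{[d]}(x,(b_1,b_i))$ on class representatives to get $R\leq a^{3^{d-1}}$, and Cauchy--Schwarz on the blocks then yields $|W(x)|\geq |V(x)|^2/R$. This equivalence-relation structure is the missing ingredient in your sketch; without it, or some explicit substitute ordering and symmetry-preservation argument, the sequential pigeonhole does not close.
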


\begin{proof}
We proceed by induction on $d$ with the case $d = 0$ being trivial. Fix an element
\[
x \in \prod_{i \in [d - 1]} X_i.
\]
Let $V(x)$ be the subset of $a \in X_d$ such that $(x, a) \in C_{[d - 1]}^{\text{acc}}$ and let $W(x)$ be the subset of $(a, b) \in X_d^2$ such that $(x, (a, b)) \in C_{[d]}^{\text{acc}}$. By definition of an additive system, we see that $W(x)$ naturally injects in $V(x) \times V(x)$. From now on we shall identify $W(x)$ with its image in $V(x) \times V(x)$. We claim that $W(x)$ defines an equivalence relation on $V(x)$. 

If we apply equation (\ref{eAdditivity}) with $a = b = c$, we conclude that for all $S \subseteq [d - 1]$, all $y \in \prod_{i \in S} X_i \times \prod_{i \in [d - 1] - S} X_i$ and all $a \in X_d$
\[
F_{S \cup \{d\}}(y, (a, a)) = 0.
\]
From this, it follows quickly that $W(x)$ is reflexive. Applying equation (\ref{eAdditivity}) with $a = c$, we then get
\[
F_{S \cup \{d\}}(y, (a, b)) + F_{S \cup \{d\}}(y, (b, a)) = F_{S \cup \{d\}}(y, (a, a)) = 0,
\]
so that $W(x)$ is symmetric. Finally, equation (\ref{eAdditivity}) with $a$, $b$ and $c$ arbitrary implies the transitivity of $W(x)$, which establishes the claim.

Our next step is to estimate the number of equivalence classes. To do so, take $(x, a), (x, b) \in V(x)$ and $\{d\} \subseteq S \subseteq [d]$. Then we write $(x, a) \sim_S (x, b)$ if $(x, a) \sim_{S'} (x, b)$ for all $\{d\} \subseteq S' \subsetneq S$ and
\[
F_S(y, (a, b))) = 0
\]
for all $y \in \prod_{i \in S - \{d\}} X_i^2 \times \prod_{i \in [d - 1] - S} X_i$ satisfying $\pi_i(y) = \pi_i(x)$ for $i \in S - \{d\}$ and $\pi_i(y) \in \{\text{pr}_1(\pi_i(x)), \text{pr}_2(\pi_i(x))\}$ for $i \in [d - 1] - S$. Note that the equivalence relation $\sim_{[d]}$ is precisely $W(x)$. 

To upper bound the number of equivalence classes, take a collection of points $(x, a_i) \in V(x)$ such that $(x, a_i) \sim_S (x, a_j)$ for all strict subsets $S$ of $[d]$. Suppose that among this collection there are $R$ equivalence classes for $\sim_{[d]}$, with representatives $(x, b_1), \dots, (x, b_R)$. Then we see that the map
\[
(x, b_i) \mapsto F_{[d]}(x, (b_1, b_i))
\]
is injective and hence we conclude that $R \leq a$. If we proceed in this way, we see that the total number of equivalence classes for $\sim_{[d]}$ is bounded by
\[
\prod_{\{d\} \subseteq S \subseteq [d]} a^{2^{d - |S|}} \leq \prod_{i = 0}^{d - 1} a^{\binom{d - 1}{i} 2^i} = a^{3^{d - 1}},
\]
since for a given $S$, there are $2^{d - |S|}$ choices for $y$. Define
\[
\delta(x) = \frac{|V(x)|}{|X_d| \times \prod_{i \in [d - 1]} |X_i^2|}.
\]
Then it follows that the density of $V(x) \times V(x)$ in $\prod_{i \in [d]} X_i^2$ is $\delta(x)^2$. Then Cauchy's inequality and our bound for the total number of equivalence classes implies that
\[
\frac{|W(x)|}{\prod_{i \in [d]} |X_i^2|} \geq \frac{\delta(x)^2}{a^{3^{d - 1}}}.
\]
So far we have proven that
\[
\frac{|C_{[d]}^{\textup{acc}}|}{\prod_{i \in [d]} |X_i^2|} = \frac{\sum_{x \in \prod_{i \in [d - 1]} X_i^2} |W(x)|}{\prod_{i \in [d]} |X_i^2|} \geq \sum_{x \in \prod_{i \in [d - 1]} X_i^2} \frac{\delta(x)^2}{a^{3^{d - 1}}}.
\]
Another application of Cauchy's inequality shows that
\[
\sum_{x \in \prod_{i \in [d - 1]} X_i^2} \frac{\delta(x)^2}{a^{3^{d - 1}}} \geq \frac{\left(\sum_{x \in \prod_{i \in [d - 1]} X_i^2} \delta(x)\right)^2}{a^{3^{d - 1}} \cdot \prod_{i \in [d - 1]} |X_i^2|}.
\]
The average of $\delta(x)$ over all choices of $x$ equals the density of $C_{[d - 1]}^{\text{acc}}$ in $X_d \times \prod_{i \in [d - 1]} X_i^2$. The induction hypothesis yields
\[
\frac{\left(\sum_{x \in \prod_{i \in [d - 1]} X_i^2} \delta(x)\right)^2}{a^{3^{d - 1}} \cdot \prod_{i \in [d - 1]} |X_i^2|} \geq \frac{(\delta^{2^{d - 1}} \cdot a^{-3^{d - 1}})^2}{a^{3^{d - 1}}} = \delta^{2^d} \cdot a^{-3^d}
\]
as desired.
\end{proof}

\section{Proof of Theorem \ref{tSharp}}
In this section we prove Theorem \ref{tSharp}. The work is divided in two parts. In Subsection \ref{Preparations} we extract from \cite{KP} the basic results that will be needed in the proof, we prove Proposition \ref{prop: additivity} and we recall a version of R\'edei reciprocity, Theorem \ref{prop: redei reciprocity}, that will be used later. With these tools in hand, we give the proof of Theorem \ref{tSharp} in Subsection \ref{Proof}. 

\subsection{Preparations} 
\label{Preparations}
The shape of Theorem \ref{tRecursive} presents a striking resemblance with Definition \ref{dAdditive}. To make the analogy more stringent one would like to turn the splitting conditions in part $(b)$ of Theorem \ref{tRecursive} into an \emph{additive system}: this is precisely the route we are going to follow. To do so we recall a refinement of Theorem \ref{tRecursive}, which will invoke the language of \emph{expansion maps}. We now recall the definition from \cite[Section 3.3]{KP}. If $A$ is a set, we write $\FF_2^A$ for the free $\FF_2$-vector space on $A$.

\begin{definition}
Let $G$ be a profinite group and let $A \subseteq \textup{Hom}(G, \FF_2)$ be a finite, linearly independent set with $|A| \geq 2$ and $\chi_0 \in A$. An expansion map for $G$ with support $A$ and pointer $\chi_0$ is a continuous group homomorphism
\[
\psi : G \rightarrow \FF_2[\FF_2^{A - \{\chi_0\}}] \rtimes \FF_2^{A - \{\chi_0\}}
\]
satisfying the following two properties
\begin{itemize}
\item for every $\chi \in A - \{\chi_0\}$, we have $\pi_\chi \circ \psi = \chi$, where $\pi_\chi$ is the projection on the coordinate of $\chi$ in $\FF_2^{A - \{\chi_0\}}$;
\item we have $\widetilde{\chi} \circ \psi = \chi_0$, where $\widetilde{\chi}$ is the unique non-trivial character of $\FF_2[\FF_2^{A - \{\chi_0\}}] \rtimes \FF_2^{A - \{\chi_0\}}$ that sends the subgroup $\{0\} \rtimes \FF_2^{A - \{\chi_0\}}$ to $0$.
\end{itemize}
\end{definition}

If $\psi$ is an expansion map for $G_{\mathbb{Q}}$, we define its \emph{field of definition} to be $L(\psi) := \overline{\mathbb{Q}}^{\text{ker}(\psi)}$. Denote by $\chi_a$ the character corresponding to $\Q(\sqrt{a})$.

\begin{theorem}
\label{t:all expansions}
Let $n$ be a positive integer and let $(a_1, \dots, a_n)$ be an acceptable vector. Then the following are equivalent. \\
$(a)$ The vector $(a_1, \dots, a_n)$ is maximal, i.e. 
$$
\emph{dim}_{\mathbb{F}_2} \emph{Cl}^{+}(\mathbb{Q}(\sqrt{a_1}, \dots, \sqrt{a_n}))[2] = \omega(a_1 \cdot \ldots \cdot a_n) \cdot 2^{n - 1}-2^n+1.
$$
$(b)$ For every $T \subsetneq [n]$, every $j \in [n] - T$ and every prime $p \mid a_j$, there exists an expansion map
$$
\psi_{T,p}: \emph{Gal}(H_2^{+}(\mathbb{Q}(\{\sqrt{a_h} : h \in T\} \cup \{\sqrt{p}\}))/\mathbb{Q}) \to \mathbb{F}_2[\mathbb{F}_2^T] \rtimes \mathbb{F}_2^T
$$
with support $\{\chi_{a_h}\}_{h \in T} \cup \{\chi_p\}$ and pointer $\chi_p$.
 
Furthermore, in case one of the two equivalent statements $(a),(b)$ holds, then the set of expansion maps described in $(b)$ when restricted to the group 
$$
\emph{Gal}(H_2^{+}(\mathbb{Q}(\sqrt{a_1}, \ldots, \sqrt{a_n}))/\mathbb{Q}(\sqrt{a_1}, \ldots, \sqrt{a_n}))
$$
gives a generating set for $\emph{Cl}^{+}(\mathbb{Q}(\sqrt{a_1}, \dots, \sqrt{a_n}))^{\vee}[2]$.
\end{theorem}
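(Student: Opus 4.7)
The plan is to prove (a) $\iff$ (b) by induction on $n$, piggybacking on Theorem \ref{tRecursive}. Since (a) is already equivalent to Theorem \ref{tRecursive}(b), it suffices to match the expansion-map condition in (b) above with the recursive splitting condition of Theorem \ref{tRecursive}(b). For the base case of smallest admissible $T$, the target group of $\psi_{T,p}$ is essentially abelian and the existence of $\psi_{T,p}$ reduces to Gauss' genus theory for $\mathbb{Q}(\sqrt{p})$ together with the classical fact that $p$ splitting in $\mathbb{Q}(\sqrt{a_h})$ is encoded by $\chi_{a_h}(\Frob_p) = 0$.

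For the inductive step I would first unpack what an expansion map $\psi_{T,p}$ encodes. Its projection onto $\mathbb{F}_2^T$ is forced to be $(\chi_{a_h})_{h \in T}$, so the only genuine data is the lift to the normal subgroup $\mathbb{F}_2[\mathbb{F}_2^T]$ subject to the pointer condition $\widetilde{\chi} \circ \psi = \chi_p$. The obstruction to such a lift lives in an $H^2$ of $\Gal(\mathbb{Q}(\{\sqrt{a_h}:h\in T\}\cup\{\sqrt{p}\})/\mathbb{Q})$ valued in $\mathbb{F}_2[\mathbb{F}_2^T]$, and unwinds into unramified splitting conditions for primes dividing the $a_h$'s inside $H_2^+(\mathbb{Q}(\{\sqrt{a_h}:h\in T\}\cup\{\sqrt{p}\}))$. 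At maximal depth $T = [n]-\{j\}$ this specializes to the splitting hypothesis of Theorem \ref{tRecursive}(b) on $p$, while for smaller $T$ it follows by applying Theorem \ref{tRecursive}(b) inductively to sub-vectors.

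For the forward direction, Theorem \ref{tRecursive}(b) supplies the splitting data, and one constructs $\psi_{T,p}$ by starting with a smaller expansion map $\psi_{T',p}$ (with $T' \subsetneq T$, $|T'|=|T|-1$) and extending it along the short exact sequence
\[
1 \to \ker\bigl(\mathbb{F}_2[\mathbb{F}_2^T]\rtimes\mathbb{F}_2^T \twoheadrightarrow \mathbb{F}_2[\mathbb{F}_2^{T'}]\rtimes\mathbb{F}_2^{T'}\bigr) \to \mathbb{F}_2[\mathbb{F}_2^T]\rtimes\mathbb{F}_2^T \to \mathbb{F}_2[\mathbb{F}_2^{T'}]\rtimes\mathbb{F}_2^{T'} \to 1,
\]
whose obstruction class is a cup-product that vanishes under the splitting hypothesis, with R\'edei reciprocity (Theorem \ref{prop: redei reciprocity}) needed to reconcile different choices of $T'$ and to guarantee compatibility of the various pointers. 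The reverse direction proceeds by reading off the splitting of $p$ from the existence of $\psi_{[n]-\{j\},p}$ via the pointer condition, and the maximality of sub-vectors from the existence of lower-order $\psi_{T,p}$'s through the inductive equivalence.

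Finally, the generating statement follows by a dimension count: restricting each $\psi_{T,p}$ to $\Gal(H_2^+(\mathbb{Q}(\sqrt{a_1},\ldots,\sqrt{a_n}))/\mathbb{Q}(\sqrt{a_1},\ldots,\sqrt{a_n}))$ lands in the abelian normal subgroup $\mathbb{F}_2[\mathbb{F}_2^T]$ and produces a character in $\mathrm{Cl}^+(\mathbb{Q}(\sqrt{a_1},\ldots,\sqrt{a_n}))^\vee[2]$; tallying these as $T$, $j$, $p$ vary yields exactly $\omega(a_1\cdots a_n)\cdot 2^{n-1}-2^n+1$ independent characters, matching the maximality bound from (a). The main obstacle is the lifting step in the forward direction: because $\mathbb{F}_2^T$ acts non-trivially on $\mathbb{F}_2[\mathbb{F}_2^T]$ by translation, the relevant extensions are not central and the cohomological obstructions are genuine cup products rather than Brauer classes; taming them requires systematically applying R\'edei reciprocity to swap roles of characters in the support, which I expect to be the delicate technical heart of the argument.
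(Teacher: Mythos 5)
The paper's ``proof'' of Theorem~\ref{t:all expansions} is a two-line citation to \cite[Theorem 3.20]{KP} and \cite[Proposition 4.1]{KP}; no argument is given in the present paper at all. Consequently there is no in-paper proof to compare your proposal against, and the actual mathematical content --- the characterization of maximality via expansion maps, and the fact that the restrictions of these expansion maps generate the $2$-torsion of the dual class group --- lives entirely in the companion work \cite{KP}. Your strategy (match condition (b) to the recursive splitting condition of Theorem~\ref{tRecursive}(b), induct on $n$, lift along the filtration of $\FF_2[\FF_2^T]\rtimes\FF_2^T$, control obstructions via R\'edei reciprocity) is a plausible outline and is in the spirit of how such statements are established, but it is far from a proof and cannot be checked against this paper.

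Two concrete gaps in your sketch are worth flagging. First, in the final dimension count you assert that the restricted expansion maps yield ``exactly $\omega(a_1\cdots a_n)\cdot 2^{n-1}-2^n+1$ independent characters.'' The naive tally is larger: for each of the $\omega(a_1\cdots a_n)$ primes $p$ (dividing some unique $a_j$, since the $a_i$ are pairwise coprime) there are $2^{n-1}$ admissible $T\subseteq[n]-\{j\}$, giving $\omega(a_1\cdots a_n)\cdot 2^{n-1}$ candidates. You therefore owe $2^n-1$ linear relations (or degeneracies) among the restricted characters, and you say nothing about where these come from or why exactly that many survive; this is not a routine bookkeeping step but precisely the point where genus theory enters. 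Second, the lifting step you describe as ``the delicate technical heart'' --- showing that the cup-product obstruction to extending $\psi_{T',p}$ to $\psi_{T,p}$ vanishes, and reconciling the various choices of $T'$ via R\'edei reciprocity --- is asserted rather than argued; in particular you do not address why the splitting hypotheses of Theorem~\ref{tRecursive}(b), which concern primes in $H_2^+$ of \emph{sub}-multiquadratic fields, translate into vanishing of cohomology classes valued in $\FF_2[\FF_2^T]$ with its nontrivial translation action. Both of these are essentially the content that the paper outsources to \cite[Theorem 3.20]{KP} and \cite[Proposition 4.1]{KP}, and they would need to be supplied before the proposal could be called a proof.
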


\begin{proof}
This follows from \cite[Theorem 3.20]{KP} and \cite[Proposition 4.1]{KP}.
\end{proof}

We shall need further understanding of expansion maps, and to this end we recall some more material from \cite[Section 3.3]{KP}. Let $e_i$ be the $i$-th basis vector of $\mathbb{F}_2^T$, which we can naturally view as an element of $\mathbb{F}_2[\mathbb{F}_2^T]$. There is a ring isomorphism
\[
\mathbb{F}_2[\mathbb{F}_2^T] \cong \FF_2[t_1, \dots, t_n]/(t_1^2, \dots, t_n^2)
\]
by sending $t_i$ to $1 \cdot \text{id} + 1 \cdot e_i$. Under this isomorphism, the action of $e_i \in \FF_2^T$ becomes multiplication by $1 + t_i$. If $\psi_{T, p}$ is an expansion map, then projection on the monomials $t_S := \prod_{i \in S} t_i$ gives a system of $1$-cochains
$$
\phi_S(\psi_{T, p}): \text{Gal}(H_2^{+}(\mathbb{Q}(\{\sqrt{a_h} : h \in T\} \cup \{\sqrt{p}\}))/\mathbb{Q}) \to \mathbb{F}_2
$$
for each $S \subseteq T$. These $1$-cochains satisfy the recursive equation
\begin{align}
\label{e2.1}
\phi_S(\sigma \tau) - \phi_S(\sigma) - \phi_S(\tau) = \sum_{\emptyset \neq U \subseteq S} \chi_U(\sigma) \phi_{S - U}(\tau)
\end{align}
with $\phi_\emptyset = \chi_p$ and $\chi_U = \prod_{i \in U} \chi_{a_i}$, where the product is taken in $\FF_2$. Reversely, a system of $1$-cochains satisfying equation (\ref{e2.1}) naturally gives rise to an expansion map. Next, a vector
$$
(\psi_{T - \{i\}, p})_{i \in T}
$$
of expansion maps with supports $\{\chi_{a_j}\}_{j \in T - \{i\}} \cup \{\chi_p\}$ and pointer $\chi_p$ for each $i \in T$, is called a \emph{commutative vector} in case for every $i, j \in T$
$$
\phi_{T - \{i, j\}}(\psi_{T - \{i\}, p}) = \phi_{T - \{i, j\}}(\psi_{T - \{j\}, p}).
$$
Note that Theorem \ref{tRecursive} implies that a maximal vector $(a_1, \ldots, a_n)$ must be \emph{strongly quadratically consistent}, i.e. we have $(\frac{p}{q}) = 1$ for every distinct $i, j \in [n]$ and every two primes $p \mid a_i, q \mid a_j$. 

\begin{theorem} 
\label{tCommvect}
Let $n$ be a positive integer, and let $(a_1, \ldots, a_n)$ be an acceptable vector, which is strongly quadratically consistent. Let $T \subsetneq [n]$, let $j \in [n]-T$ and let $p$ be a prime divisor of $a_j$. Then the following are equivalent. \\
$(a)$  There exists an expansion map
$$
\psi_{T,p}: \emph{Gal}(H_2^{+}(\mathbb{Q}(\{\sqrt{a_h} : h \in T\} \cup \{\sqrt{p}\}))/\mathbb{Q}) \to \mathbb{F}_2[\mathbb{F}_2^T] \rtimes \mathbb{F}_2^T
$$
with support $\{\chi_{a_h}\}_{h \in T} \cup \{\chi_p\}$ and pointer $\chi_p$. \\
$(b)$ There exists a commutative vector of expansion maps 
$$
(\psi_{T-\{i\},p})_{i \in T}
$$
with supports $\{\chi_{a_h}\}_{h \in T-\{i\}} \cup \{\chi_p\}$ and pointer $\chi_p$ for each $i \in T$, satsfying the following condition. For every $i \in T$ and every prime divisor $q$ of $a_i$, we have that $q$ splits completely in the field of definition of $\psi_{T-\{i\},p}$.
\end{theorem}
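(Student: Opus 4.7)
The plan is to establish the two directions separately. For $(a) \Rightarrow (b)$, I would post-compose $\psi_{T,p}$ with the canonical surjection
$$
\mathbb{F}_2[\mathbb{F}_2^T] \rtimes \mathbb{F}_2^T \twoheadrightarrow \mathbb{F}_2[\mathbb{F}_2^{T-\{i\}}] \rtimes \mathbb{F}_2^{T-\{i\}}
$$
obtained by killing the generator $t_i$; this immediately produces an expansion map $\psi_{T-\{i\},p}$ with the required support and pointer whose cochains $\phi_S$ for $S \subseteq T-\{i\}$ coincide with those of $\psi_{T,p}$. Commutativity of the resulting vector is automatic, since any $\phi_{T-\{i,j\}}$ agrees with the common value $\phi_{T-\{i,j\}}(\psi_{T,p})$. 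For the splitting condition at a prime $q \mid a_i$, strong quadratic consistency forces $q$ to split completely in $\mathbb{Q}(\{\sqrt{a_h}: h \in T-\{i\}\} \cup \{\sqrt{p}\})$, so one may choose a Frobenius $\sigma_q$ in $\Gal(K/\mathbb{Q})$ lying in the decomposition group at $q$, where $K := H_2^{+}(\mathbb{Q}(\{\sqrt{a_h}: h \in T\} \cup \{\sqrt{p}\}))$. The only ramification at $q$ is in the $\mathbb{Q}(\sqrt{a_i})$ direction, so $\psi_{T,p}(\sigma_q)$ lies in the kernel of the projection onto $\psi_{T-\{i\},p}$, yielding the splitting in $L(\psi_{T-\{i\},p})$.

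For the more substantial direction $(b) \Rightarrow (a)$, the commutativity of the given vector allows us to assemble a well-defined family $(\phi_S)_{S \subsetneq T}$ on $\Gal(K/\mathbb{Q})$ by setting $\phi_S := \phi_S(\psi_{T-\{i\},p})$ for any $i \notin S$. Producing the missing top cochain $\phi_T$ subject to the recursion (\ref{e2.1}) is equivalent to showing that the $2$-cocycle
$$
c(\sigma,\tau) := \sum_{\emptyset \neq U \subseteq T} \chi_U(\sigma)\, \phi_{T-U}(\tau)
$$
is a coboundary, i.e.\ that $[c] = 0$ in $H^2(\Gal(K/\mathbb{Q}), \mathbb{F}_2)$. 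The cocycle property is a mechanical consequence of the recursions satisfied by each $\phi_{T-U}$. To kill $[c]$ one uses the Hasse principle for $H^2$ with $\mathbb{F}_2$-coefficients together with local analysis: unramified places contribute nothing, and at a prime $q \mid a_i$ the local restriction of $c$ is controlled by the pairing of the ramification character $\chi_{a_i}$ with $\phi_{T-\{i\}}(\Frob_q)$, an invariant which encodes precisely the splitting of $q$ in $L(\psi_{T-\{i\},p})$ and hence vanishes by hypothesis. The places above $p$ are handled symmetrically using the pointer condition, while the places above $2$ and $\infty$ are harmless thanks to the congruence conditions built into the definition of an acceptable vector.

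The main obstacle is the global step: the Hasse principle for $H^2$ with $\mathbb{F}_2$-coefficients leaves a residual one-dimensional obstruction coming from the sum of local invariants, so even with all local conditions met one must invoke R\'edei reciprocity (Theorem \ref{prop: redei reciprocity}) to obtain global vanishing of $[c]$. The commutativity of the given vector packages exactly the symbols that enter R\'edei reciprocity, and the splitting hypotheses correspond to the triviality of the individual local R\'edei symbols; reciprocity then converts the sum of these symbols into the desired global identity and delivers the cochain $\phi_T$, completing the construction of $\psi_{T,p}$.
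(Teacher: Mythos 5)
The paper itself gives no argument for Theorem~\ref{tCommvect}: its "proof" is a single citation to \cite[Theorem~1.5]{KP}. Your sketch is therefore an independent attempt, and it is reasonable in spirit --- reduce each implication to statements about the cochains $\phi_S$, and treat $(b)\Rightarrow(a)$ as an embedding problem whose obstruction is a $2$-cocycle to be annihilated by local-global analysis plus a reciprocity law. But there are two concrete gaps.

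In $(a)\Rightarrow(b)$, you assert that $\psi_{T,p}(\sigma_q)$ lands in the kernel of the projection onto $\psi_{T-\{i\},p}$ "because the only ramification at $q$ is in the $t_i$ direction." This does not come for free. One must analyse both the inertia generator $\sigma$ at $q$ and a Frobenius lift $\tau$. For the inertia one uses that $\psi_{T,p}(\sigma)=(s,e_i)$ has order $2$, forcing $t_i s = 0$, hence $s$ divisible by $t_i$, hence $\sigma$ dies after killing $t_i$; for the Frobenius one uses that $\tau$ commutes with $\sigma$ (the decomposition group here being abelian of this shape), which again forces the $\mathbb{F}_2[\mathbb{F}_2^T]$-component of $\psi_{T,p}(\tau)$ to be divisible by $t_i$. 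Your sketch supplies neither computation and does not separate inertia from Frobenius, so the splitting assertion is not established.

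The more serious issue is in $(b)\Rightarrow(a)$: you invoke Theorem~\ref{prop: redei reciprocity} to close the global reciprocity step. But Theorem~\ref{prop: redei reciprocity} as stated applies only to a triple $(a_1,a_2,a_3)$ and to expansion maps into $\mathbb{F}_2[\mathbb{F}_2]\rtimes\mathbb{F}_2$, i.e.\ the case $|T|=1$. For general $T$ your obstruction cocycle involves the higher cochains $\phi_{T-U}$ and the relevant ``sum of local symbols'' is a higher R\'edei reciprocity --- exactly what Remark~\ref{HRR} points out requires \cite[Theorem~3.3]{KPconics} and is \emph{not} proved in this paper. Without that input your argument cannot force the vanishing of $[c]$ when $|T|\geq 2$. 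Finally, even granting vanishing of the inflated class in $H^2(G_{\mathbb{Q}},\mathbb{F}_2)$, you still need to arrange the resulting $\phi_T$ so that the expansion map it builds has field of definition inside $H_2^{+}(\mathbb{Q}(\{\sqrt{a_h}:h\in T\}\cup\{\sqrt{p}\}))$ (unramified at all finite places); the inflation-restriction sequence does not hand you a cochain on $\mathrm{Gal}(K/\mathbb{Q})$, and the descent step needs to be addressed explicitly.
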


\begin{proof}
This is a special case of \cite[Theorem 1.5]{KP}.
\end{proof}

In order to prove part (a) of Theorem \ref{tSharp}, we aim to combine Theorem \ref{t:all expansions} and Theorem \ref{tCommvect} with Proposition \ref{shrinking lemma}. An import stepping stone is to guarantee equation (\ref{eAdditivity}) for the various cochains $\phi_S(\psi_{T,p})$ attached to an expansion map $\psi_{T,p}$. We now explain what this means and how to achieve this. 

Let $n \in \Z_{\geq 1}$, let $(k_1, \ldots, k_n) \in \mathbb{Z}_{\geq 1} \times (2 \cdot \mathbb{Z}_{\geq 1})^{n-1}$ and let $M \in \mathbb{Z}_{\geq 1}$. Take
$$
Y := Y_1 \times \dots \times Y_n
$$
to be a product space, where each $Y_i$ is a set of cardinality $M$ consisting of acceptable squarefree integers. We further require that any two distinct elements in $\cup_{i = 1}^n Y_i$ are pairwise coprime and that $\omega(z) = \frac{k_i}{2}$ for each $i \in [n] - \{1\}$ and $z \in Y_i$, while $\omega(z) = k_1$ for $z \in Y_1$. We call such a $Y$ a $((k_1, \ldots,k_n),M)$-space.

Let $Y$ now be a $((k_1, \ldots,k_n),M)$-space. We denote by $K(Y)$ the multiquadratic number field obtained by adding all the square roots of the prime divisors of the elements in $\cup_{i=1}^{n}Y_i$ to $\Q$. Observe that for each prime $p$ ramifying in $K(Y)/\mathbb{Q}$, the inertia subgroups of $p$ in $\text{Gal}(H_2^{+}(K(Y))/\mathbb{Q})$ are cyclic of size $2$. For each such prime $p$ we fix a choice of such an inertia element $\sigma_p$. We will denote this choice by $\mathfrak{G}:=\{\sigma_p\}_{p \mid \prod_{i=1}^{n}(\prod_{y \in Y_i}y)}$ and refer to it as \emph{a choice of inertia} for $Y$. 

\begin{proposition} 
\label{prop: at most one expansion}
$(a)$ Let $Y$ be a $((k_1, \ldots,k_n),M)$-space together with a choice of inertia $\mathfrak{G}$. Let $S \subsetneq [n]$ and let $j \in [n] - S$. Pick a non-trivial divisor $d$ of an element in $Y_j$ and pick $\{a_i\}_{i \in S}$ with $a_i$ a product of elements in $Y_i$ for each $i \in S$. Then there exists at most one expansion map
$$
\psi_{(a_i)_{i \in S}; d}(\mathfrak{G}): \emph{Gal}(H_2^+(K(Y))/\mathbb{Q}) \to \mathbb{F}_2[\mathbb{F}_2^{\{\chi_{a_i}: i \in S \ \textup{and} \ \chi_{a_i} \neq 0\}}] \rtimes \mathbb{F}_2^{\{\chi_{a_i}: i \in S \ \textup{and} \ \chi_{a_i} \neq 0\}},
$$
with support $\{\chi_{a_i} : i \in S \textup{ and } \chi_{a_i} \neq 0\} \cup \{\chi_d\}$ and pointer $\chi_d$ such that
$$
\phi_T(\psi_{(a_i)_{i \in S}; d}(\mathfrak{G}))(\sigma) = 0
$$
for each $\emptyset \neq T \subseteq S$ and each $\sigma \in \mathfrak{G}$. \\
$(b)$ If $\psi_{(a_i)_{i \in S}; d}(\mathfrak{G})$ exists, then it factors through $\emph{Gal}(H_2^{+}(\mathbb{Q}(\{\sqrt{a_i}\}_{i \in S}, \sqrt{d}))/\mathbb{Q})$.
\end{proposition}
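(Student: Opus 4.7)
The plan is induction on $|T|$ for $T \subseteq S$, showing that the cochains $\phi_T(\psi)$ of any expansion map $\psi$ satisfying the hypotheses are entirely determined. If $\psi_1, \psi_2$ both satisfy the hypotheses, set $\delta_T := \phi_T(\psi_1) - \phi_T(\psi_2)$. The base case $\delta_\emptyset = 0$ is immediate since $\phi_\emptyset = \chi_d$ for both. For the inductive step, subtracting the two instances of (\ref{e2.1}) and invoking the inductive hypothesis yields
\[
\delta_T(\sigma\tau) - \delta_T(\sigma) - \delta_T(\tau) = \sum_{\emptyset \neq U \subseteq T} \chi_U(\sigma)\, \delta_{T - U}(\tau) = 0,
\]
so $\delta_T$ is a character of $G := \textnormal{Gal}(H_2^+(K(Y))/\mathbb{Q})$. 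One next classifies such characters: since $H_2^+(K(Y))/K(Y)$ is unramified at finite primes and the primes dividing $N := \prod_{i \in [n]} \prod_{y \in Y_i} y$ are $\equiv 1 \pmod 4$, a standard ramification argument forces every non-trivial character of $G$ to be $\chi_e$ for some positive squarefree $e \mid N$. The relation $\chi_e(\sigma_p) \neq 0 \iff p \mid e$ combined with the vanishing hypothesis on $\mathfrak{G}$ then forces $e = 1$, whence $\delta_T = 0$. Since $\psi(g) = \bigl(\sum_T \phi_T(g) t_T,\, (\chi_{a_i}(g))_i\bigr)$ is recovered from its cochains, we conclude $\psi_1 = \psi_2$.

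\textbf{Part (b): factorization.} Set $M := \mathbb{Q}(\{\sqrt{a_i}\}_{i \in S}, \sqrt{d})$. The plan is to show $L(\psi) \subseteq H_2^+(M)$. From the expansion-map defining relations $\pi_{\chi_{a_i}} \circ \psi = \chi_{a_i}$ and $\widetilde{\chi} \circ \psi = \chi_d$ we get $M \subseteq L(\psi)$. Under the embedding $\textnormal{Gal}(L(\psi)/\mathbb{Q}) \hookrightarrow H := \mathbb{F}_2[\mathbb{F}_2^S] \rtimes \mathbb{F}_2^S$, the subgroup $\textnormal{Gal}(L(\psi)/M)$ lands in the commutator subgroup $[H,H] \subseteq \mathbb{F}_2[\mathbb{F}_2^S]$, which is elementary abelian, so $L(\psi)/M$ is multiquadratic. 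For unramifiedness at a finite prime $p$ I split into cases: (i) if $p \nmid N$, then $p$ is already unramified in $L(\psi) \subseteq H_2^+(K(Y))$; (ii) if $p \mid d \cdot \prod_{i \in S} a_i$, then inertia in $M/\mathbb{Q}$ at $p$ has order $2$, and since $L(\psi)/K(Y)$ is unramified while inertia in $K(Y)/\mathbb{Q}$ has order $2$, inertia in $L(\psi)/\mathbb{Q}$ has order exactly $2$ and maps isomorphically onto inertia in $M/\mathbb{Q}$, leaving trivial inertia in $L(\psi)/M$; (iii) if $p \mid N$ but $p \nmid d \cdot \prod_{i \in S} a_i$, I verify $\psi(\sigma_p) = e$ directly: the semidirect factor $v_{\sigma_p} = (\chi_{a_i}(\sigma_p))_i$ vanishes since $p \nmid a_i$, $\phi_\emptyset(\sigma_p) = \chi_d(\sigma_p) = 0$ since $p \nmid d$, and $\phi_T(\sigma_p) = 0$ for $\emptyset \neq T \subseteq S$ by the vanishing hypothesis on $\mathfrak{G}$; so $\sigma_p$ and its conjugates lie in $\ker\psi$, forcing trivial inertia at $p$ in $L(\psi)/\mathbb{Q}$.

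The principal hurdle is the classification of characters of $G$ underpinning the induction in part (a); once that and the inertia bookkeeping in the tower $\mathbb{Q} \subseteq M \subseteq L(\psi) \subseteq H_2^+(K(Y))$ of part (b) are in place, both statements reduce to direct consequences of the cocycle equation~(\ref{e2.1}) and the normalization encoded in $\mathfrak{G}$.
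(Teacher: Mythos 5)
Your proof is correct, and for part (a) it takes a genuinely different (more pedestrian but still valid) route than the paper's. The paper's proof of (a) is a short group-theoretic argument: the conjugates of $\mathfrak{G}$ generate $G := \operatorname{Gal}(H_2^+(K(Y))/\mathbb{Q})$ because $\mathbb{Q}$ has no unramified extensions; since $G$ is a $2$-group, $\Phi(G) = G^2[G,G]$ and conjugate elements agree modulo $\Phi(G)$, so $\mathfrak{G}$ itself generates $G$; the hypotheses pin down $\psi$ on $\mathfrak{G}$, hence on all of $G$. Your induction on $|T|$ using equation (\ref{e2.1}) reduces to showing that the difference $\delta_T$, a character of $G$ that by the $\mathfrak{G}$-vanishing hypothesis kills every $\sigma_p$, must be trivial. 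Your classification of quadratic subfields of $H_2^+(K(Y))/\mathbb{Q}$ is correct (the key points you correctly leave implicit: $2$ is unramified in $H_2^+(K(Y))/\mathbb{Q}$ because $K(Y)$ is generated by $\sqrt{p}$ with $p\equiv 1\pmod 4$ and $H_2^+(K(Y))/K(Y)$ is unramified at finite places, forcing $e\equiv 1\pmod 4$ and hence $e>0$ and $e\mid N$; and $\chi_e(\sigma_p)\neq 0 \Leftrightarrow p\mid e$). So both routes ultimately exploit that $\mathfrak{G}$ generates $G/\Phi(G)$, the paper via Frattini abstraction and you via explicit ramification theory. The paper's version is shorter and transparently generalizes; yours makes the dependence on the acceptability hypothesis ($1\bmod 4$) visible.

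For part (b), your argument matches the paper's in spirit but is actually more careful: the paper only explains unramifiedness at primes $q \nmid d\prod_i a_i$ (your cases (i) and (iii)), implicitly treating case (ii). Your explicit handling of $p \mid d\prod_i a_i$ --- inertia in $L(\psi)/\mathbb{Q}$ has order at most $2$ since $L(\psi)\subseteq H_2^+(K(Y))$, surjects onto the order-$2$ inertia in $M/\mathbb{Q}$, hence is killed in $L(\psi)/M$ --- is the correct justification. Your identification of $\ker(\operatorname{Gal}(L(\psi)/\mathbb{Q})\to\operatorname{Gal}(M/\mathbb{Q}))$ with $[H,H]$ (the augmentation ideal of $\mathbb{F}_2[\mathbb{F}_2^S]$, which is indeed $\ker(\widetilde\chi|_{\mathbb{F}_2[\mathbb{F}_2^S]})$) is also correct. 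No gaps.
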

 
\begin{proof}
Since $\mathbb{Q}$ has no non-trivial unramified extensions, the group $\text{Gal}(H_2^+(K(Y))/\mathbb{Q})$ is generated by the conjugacy classes of all elements in $\mathfrak{G}$. We claim that $\mathfrak{G}$ already generates $\text{Gal}(H_2^{+}(K(Y))/\mathbb{Q})$. Indeed, if $G$ is any finite group and $S \subseteq G$, then $S$ generates $G$ if and only if $S$ generates $G/\Phi(G)$, where $\Phi(G)$ is the Frattini subgroup. Furthermore, for a $2$-group we know that the Frattini subgroup $\Phi(G)$ equals $G^2 [G, G]$, so that two conjugate elements have the same image in $G/\Phi(G)$. This gives part $(a)$ immediately, since the requirement $\phi_T(\psi_{(a_i)_{i \in S}; d}(\mathfrak{G}))(\sigma) = 0$ for each $\emptyset \neq T \subseteq S$ determines the image of $\sigma$ under $\psi_{(a_i)_{i \in S}; d}(\mathfrak{G})$. 

To obtain part $(b)$ we start by noticing that $L(\psi_{(a_i)_{i \in S}; d}(\mathfrak{G}))$ is an abelian extension of $\mathbb{Q}(\{\sqrt{a_i}\}_{i \in S}, \sqrt{d})$. We only need to guarantee that it is unramified at all finite places. For this it is enough to notice that for each prime $q$ not dividing $a_i$ nor $d$ one has that 
$$
\psi_{(a_i)_{i \in S}; d}(\mathfrak{G})(\sigma_q) = \text{id},
$$
precisely thanks to our requirement that $\phi_T(\psi_{(a_i)_{i \in S}; d}(\mathfrak{G}))(\sigma_q) = 0$ for each $\emptyset \neq T \subseteq S$. 
\end{proof}

The next proposition gives the sought behavior among expansion maps. For convenience we introduce the following notation. Let $S \subseteq [n]$ and let $U \subseteq [n]-S$. Let 
$$
x \in \prod_{i \in S} Y_i^2 \times \prod_{j \in U} Y_j,
$$
then we write
$$
c(x) := ((\text{pr}_1(\pi_i(x)) \text{pr}_2(\pi_i(x)))_{i \in S}, (\pi_j(x))_{j \in U})
$$
for the vector obtained by multiplying out the double entries of $x$ and leaving unchanged the single entries of $x$. 

\begin{proposition}
\label{prop: additivity}
Let $Y$ be a $((k_1, \ldots,k_n),M)$-space together with a choice of inertia $\mathfrak{G}$. Let $S \subsetneq [n]$, let $j \in [n] - S$ and $i_0 \in S$. Pick a non-trivial divisor $d$ of an element in $Y_j$. Let $U \subseteq [n]-S-\{j\}$. Let $x_1, x_2, x_3$ be three elements of $\prod_{i \in S} Y_i^2 \times \prod_{u \in U}Y_u$ such that they coincide outside $i_0$ and such that 
$$
\textup{pr}_1(\pi_{i_0}(x_1)) = \textup{pr}_2(\pi_{i_0}(x_3)), \ \ \  \textup{pr}_1(\pi_{i_0}(x_2)) = \textup{pr}_2(\pi_{i_0}(x_1)), \ \ \  \textup{pr}_1(\pi_{i_0}(x_3)) = \textup{pr}_2(\pi_{i_0}(x_2)).
$$
Suppose $\psi_{c(x_1); d}(\mathfrak{G})$ and $\psi_{c(x_2); d}(\mathfrak{G})$ exist. Then the map $\psi_{c(x_3); d}(\mathfrak{G})$ exists and
$$
\phi_T(\psi_{c(x_3); d}(\mathfrak{G})) = \phi_T(\psi_{c(x_1); d}(\mathfrak{G}))+\phi_T(\psi_{c(x_2); d}(\mathfrak{G}))
$$
for each $\emptyset \neq T \subseteq S$.
\end{proposition}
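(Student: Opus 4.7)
My plan is to construct $\psi_{c(x_3); d}(\mathfrak{G})$ directly by declaring a candidate expansion map whose cochains are the termwise sum of those of $\psi_{c(x_1); d}(\mathfrak{G})$ and $\psi_{c(x_2); d}(\mathfrak{G})$, verify that this candidate really is an expansion map, and then invoke the uniqueness clause of Proposition \ref{prop: at most one expansion} to identify the two. The cochain identity asserted in the statement is then built into the construction.

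The first step is to pass to the cochain picture. An expansion map into $\FF_2[\FF_2^{S}] \rtimes \FF_2^{S}$ with pointer $\chi_d$ is equivalent to a system $(\phi_T)_{T \subseteq S}$ of $\FF_2$-valued $1$-cochains on $\Gal(H_2^{+}(K(Y))/\Q)$ with $\phi_\emptyset = \chi_d$ satisfying the recursive cocycle relation \eqref{e2.1}, as recalled in the discussion after Theorem \ref{t:all expansions}. The algebraic input from the hypothesis is the character identity at $i_0$: writing $\pi_{i_0}(x_1) = (a, b)$, $\pi_{i_0}(x_2) = (b, c)$, $\pi_{i_0}(x_3) = (c, a)$, the product $c(x_1)_{i_0} \cdot c(x_2)_{i_0} \cdot c(x_3)_{i_0} = (abc)^2$ is a perfect square, so for the characters $\chi^{(k)}_i := \chi_{c(x_k)_i}$ one has $\chi^{(1)}_{i_0} + \chi^{(2)}_{i_0} + \chi^{(3)}_{i_0} = 0$, while $\chi^{(1)}_i = \chi^{(2)}_i = \chi^{(3)}_i$ for $i \in S - \{i_0\}$ by coincidence of the $x_k$'s off $i_0$. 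Multiplying out gives, for every $U \subseteq S$, either $\chi^{(3)}_U = \chi^{(1)}_U + \chi^{(2)}_U$ (when $i_0 \in U$) or $\chi^{(3)}_U = \chi^{(1)}_U = \chi^{(2)}_U$ (when $i_0 \notin U$).

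Next, I would set $\widetilde{\phi}_T := \phi_T(\psi_{c(x_1); d}(\mathfrak{G})) + \phi_T(\psi_{c(x_2); d}(\mathfrak{G}))$ for $\emptyset \neq T \subseteq S$ and $\widetilde{\phi}_\emptyset := \chi_d$. The inertia condition $\widetilde{\phi}_T(\sigma) = 0$ for $\sigma \in \mathfrak{G}$ follows immediately from the corresponding vanishing of the summands. The crux is to verify that $(\widetilde{\phi}_T)_{T \subseteq S}$ obeys the recursive relation \eqref{e2.1} with respect to the new characters $\chi^{(3)}_U$. For this I compute $d\widetilde{\phi}_T = d\phi_T(\psi_{c(x_1); d}(\mathfrak{G})) + d\phi_T(\psi_{c(x_2); d}(\mathfrak{G}))$ via the known recursions for the two input cochains, and then regroup terms using the character identities above, splitting on whether $i_0 \in U$ and whether $T - U = \emptyset$. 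Once the resulting expression matches $\sum_{\emptyset \neq U \subseteq T} \chi^{(3)}_U \cup \widetilde{\phi}_{T - U}$, Proposition \ref{prop: at most one expansion} identifies $(\widetilde{\phi}_T)$ with the system of cochains of the unique expansion map $\psi_{c(x_3); d}(\mathfrak{G})$, delivering both existence and the stated formula.

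The main obstacle will be the coboundary bookkeeping in the previous paragraph: after the substitutions, the cross-terms of the form $\chi^{(1)}_U \cup \phi_{T-U}(\psi_{c(x_2); d}(\mathfrak{G}))$ and $\chi^{(2)}_U \cup \phi_{T-U}(\psi_{c(x_1); d}(\mathfrak{G}))$ must be shown to assemble correctly into $\chi^{(3)}_U \cup \widetilde{\phi}_{T-U}$. This algebraic rearrangement is where the three-term cyclic relation at $i_0$ and the coincidence of the $x_k$'s off $i_0$ are used in an essential way.
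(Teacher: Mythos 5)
Your approach matches the paper's: define the candidate cochains of $\psi_{c(x_3);d}(\mathfrak{G})$ by summing those of $\psi_{c(x_1);d}(\mathfrak{G})$ and $\psi_{c(x_2);d}(\mathfrak{G})$, verify the recursion \eqref{e2.1}, check the vanishing on $\mathfrak{G}$, and invoke the uniqueness of Proposition~\ref{prop: at most one expansion}. Your observations about the character identities at $i_0$ and away from $i_0$ are correct.

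The ``coboundary bookkeeping'' that you defer, however, does \emph{not} go through with the naive assignment $\widetilde{\phi}_T := \phi_T^{(1)}+\phi_T^{(2)}$ for all nonempty $T$ (writing $\phi_T^{(k)}$ for $\phi_T(\psi_{c(x_k);d}(\mathfrak{G}))$ and $\chi_U^{(k)}$ for $\prod_{i\in U}\chi_{c(x_k)_i}$). If $i_0\notin T$, projecting both expansion maps onto the coordinates in $(S\cup U)-\{i_0\}$ produces two expansion maps with the same support, the same pointer $\chi_d$, and the same vanishing on $\mathfrak{G}$; Proposition~\ref{prop: at most one expansion} then forces $\phi_T^{(1)}=\phi_T^{(2)}$, so $\widetilde{\phi}_T\equiv 0$. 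But at such a $T$ the recursion \eqref{e2.1}, with $\widetilde{\phi}_\emptyset=\chi_d$, would require $0 = \chi_T^{(3)}(\sigma)\,\chi_d(\tau)$, which fails whenever $\chi_T^{(3)}\neq 0$. The fix is to put $\widetilde{\phi}_T := \phi_T^{(1)}\ (=\phi_T^{(2)})$ for $T\not\ni i_0$ and to take the sum only when $i_0\in T$. With this definition the verification of \eqref{e2.1} is a clean case split on whether $i_0\in U$: when $i_0\in U$, use $\chi_U^{(3)}=\chi_U^{(1)}+\chi_U^{(2)}$ together with $\phi_{T-U}^{(1)}=\phi_{T-U}^{(2)}$; when $i_0\notin U$, use $\chi_U^{(1)}=\chi_U^{(2)}=\chi_U^{(3)}$. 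Accordingly, the stated additivity formula should be read for $T\ni i_0$ --- which covers the only case used in Proposition~\ref{prop: construction of additive system}, since there the relevant cochain is indexed by the full support and the varying coordinate always lies in it --- while for $T\not\ni i_0$ one has $\phi_T^{(3)}=\phi_T^{(1)}=\phi_T^{(2)}$ rather than the sum.
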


\begin{proof}
This is now an immediate consequence of Proposion \ref{prop: at most one expansion}. Indeed, the maps 
$$
\phi_T(\psi_{(\pi_i(c(x_1)))_{i \in S}; d}(\mathfrak{G})) + \phi_T(\psi_{(\pi_i(c(x_2)))_{i \in S}; d}(\mathfrak{G}))
$$
yield an expansion map from the group $\text{Gal}(H_2^{+}(K(Y))/\mathbb{Q})$ to the group 
$$
\mathbb{F}_2[\mathbb{F}_2^{\{\chi_{\pi_i(c(x_3))}: i \in S \cup U \ \text{and} \ \chi_{\pi_i(c(x_3))} \neq 0\}}] \rtimes \mathbb{F}_2^{\{\chi_{\pi_i(c(x_3))}: i \in S \cup U \ \text{and} \ \chi_{\pi_i(c(x_3))} \neq 0\}}.
$$
Furthermore, the vanishing at all elements of $\mathfrak{G}$ follows by construction. This gives the desired conclusion. 
\end{proof}  

We next give a more specific version of Theorem \ref{tCommvect} that encodes the choice of inertia elements $\mathfrak{G}$. We call a $((k_1, \ldots,k_n),M)$-space $Y$ \emph{quadratically consistent} in case each of its vectors are strongly quadratically consistent. 

\begin{theorem}
\label{tCommvect-with-inertia}
Let $Y$ be a quadratically consistent $((k_1, \ldots,k_n),M)$-space, together with a choice of inertia $\mathfrak{G}$. Let $S \subsetneq [n]$ and let $j \in [n] - S$. Pick a non-trivial divisor $d$ of an element in $Y_j$. Pick furthermore $U \subseteq [n] - S - \{j\}$. Let $a$ be an element of $\prod_{i \in S} Y_i^2 \times \prod_{u \in U} Y_u$. Then the following are equivalent. \\
$(a)$ The map $\psi_{c(a); d}(\mathfrak{G})$ exists. \\
$(b)$ For each $h \in S \cup U$ the map $\psi_{\pi_{S \cup U-\{h\}}(c(a)); d}(\mathfrak{G})$ exists and every prime ramifying in $\mathbb{Q}(\sqrt{a_h})/\mathbb{Q}$ splits completely in the field of definition of $\psi_{\pi_{S \cup U-\{h\}}(c(a)); d}(\mathfrak{G})$
\end{theorem}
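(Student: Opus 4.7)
The plan is to deduce the theorem from Theorem \ref{tCommvect} combined with the uniqueness statement of Proposition \ref{prop: at most one expansion}. The key observation driving both directions is that uniqueness of the inertia-normalised expansion maps automatically forces commutativity in the sense of Theorem \ref{tCommvect}.

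For the direction $(a) \Rightarrow (b)$, given the canonical top-level expansion map $\psi := \psi_{c(a); d}(\mathfrak{G})$, I would first project it onto the subsystem of $1$-cochains indexed by $T \subseteq (S \cup U) \setminus \{h\}$. Since the recursion \eqref{e2.1} for such $T$ only involves $\chi_V$ and $\phi_{T - V}$ with $V \subseteq T$, the index $h$ never appears, and the projected subsystem assembles into an expansion map of support $\{\chi_{a_i} : i \in (S \cup U) \setminus \{h\}\} \cup \{\chi_d\}$ and pointer $\chi_d$. It inherits the inertia-vanishing property from $\psi$, hence by Proposition \ref{prop: at most one expansion} it equals $\psi_{\pi_{S \cup U - \{h\}}(c(a)); d}(\mathfrak{G})$. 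The splitting conclusion is then imported from the forward implication of Theorem \ref{tCommvect}: the commutative vector of expansion maps with the splitting property produced there must coincide (again by the uniqueness in Proposition \ref{prop: at most one expansion}) with the canonical projections just constructed, so the splitting takes place in the desired fields of definition.

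For the direction $(b) \Rightarrow (a)$, I would first verify that the family $(\psi_{\pi_{S \cup U - \{h\}}(c(a)); d}(\mathfrak{G}))_{h \in S \cup U}$ is automatically a commutative vector: for $h_1 \neq h_2$ in $S \cup U$ and $T \subseteq (S \cup U) \setminus \{h_1, h_2\}$ the two candidate cochains $\phi_T$ both satisfy \eqref{e2.1} with the same inductively-determined lower data and both vanish on $\mathfrak{G}$, hence they coincide by Proposition \ref{prop: at most one expansion}. Together with the splitting hypothesis this is precisely the input for the backward implication of Theorem \ref{tCommvect}, producing an expansion map $\psi$ with the full desired support. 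To upgrade $\psi$ to the canonical $\psi_{c(a); d}(\mathfrak{G})$, I would adjust the top cochain $\phi_{S \cup U}(\psi)$ by an element $\eta \in \Hom(\Gal(H_2^+(K(Y))/\mathbb{Q}), \FF_2)$: such a modification preserves \eqref{e2.1} and leaves the strictly lower cochains untouched (and those are already the canonical ones, hence vanish on $\mathfrak{G}$). Any target function $\mathfrak{G} \to \FF_2$ is the restriction of such an $\eta$, because the characters $\chi_q$ attached to the primes $q$ ramifying in $K(Y)$ satisfy $\chi_q(\sigma_{q'}) = \delta_{q,q'}$ on $\mathfrak{G}$ and thus span $\FF_2^{\mathfrak{G}}$. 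The main technical subtlety is this last character-adjustment step, together with the bookkeeping needed to identify the commutative vector delivered by Theorem \ref{tCommvect} with the canonical projections in the first direction.
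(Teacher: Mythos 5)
Your proposal is correct and follows essentially the same path as the paper's own (much terser) argument: use the uniqueness in Proposition \ref{prop: at most one expansion} to force the family of lower-level inertia-normalised maps to form a commutative vector, invoke Theorem \ref{tCommvect} in both directions, and then achieve the inertia-normalisation of the top cochain by twisting with a homomorphism to $\FF_2$ built from the ramified characters $\chi_q$ (which, as you note, evaluate as a dual basis on $\mathfrak{G}$). Your write-up merely makes explicit a few bookkeeping steps (the projection-and-reidentification in the forward direction, and the verification that twisting the top cochain preserves the recursion \eqref{e2.1}) that the paper leaves implicit.
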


\begin{proof}
Proposition \ref{prop: at most one expansion} shows that the vector $(\psi_{(\pi_i(c(a)))_{i \in S \cup U: i \neq h}; d}(\mathfrak{G}))_{h \in S \cup U}$ is commutative. Hence the conclusion follows from Theorem \ref{tCommvect}, provided that we can ensure that $\varphi_{S \cup U}$ vanishes on $\mathfrak{G}$. But, looking at equation (\ref{e2.1}), we see that we still have the freedom to twist $\varphi_{S \cup U}$ by the characters $\chi_p$ for $p$ ramifying in $K(Y)/\mathbb{Q}$.
\end{proof}

Finally, in order to obtain Theorem \ref{tSharp}, part $(b)$, we recast here (a special case of) R\'edei reciprocity, re-written in the language of expansion maps. Suppose that $(a_1,a_2,a_3)$ is a strongly quadratically consistent vector. Then there exists an expansion map $\psi_{a_1; a_2}: G_{\mathbb{Q}} \to \mathbb{F}_2[\mathbb{F}_2] \rtimes \mathbb{F}_2$ such that every prime divisor $p$ of $a_3$ splits completely in $\mathbb{Q}(\sqrt{a_1}, \sqrt{a_2})/\mathbb{Q}$.

Hence $\text{Frob}(p)$ lands in the central subgroup $\text{Gal}(L(\psi_{a_1; a_2})/\mathbb{Q}(\sqrt{a_1}, \sqrt{a_2}))$, which can be canonically identified with $\mathbb{F}_2$: here we recall that $L(\psi_{a_1;a_2})$ denotes the field of definition of an expansion map. In what follows Frobenius symbols need to be interpreted as elements of $\mathbb{F}_2$.

\begin{theorem} 
\label{prop: redei reciprocity}
Let $(a_1,a_2,a_3)$ be a strongly quadratically consistent vector. Let $\psi_{a_1; a_2}: \emph{Gal}(H_2^+(\mathbb{Q}(\sqrt{a_1}, \sqrt{a_2}))/\mathbb{Q}) \to \mathbb{F}_2[\mathbb{F}_2] \rtimes \mathbb{F}_2$ and $ \psi_{a_1; a_3}: \emph{Gal}(H_2^+(\mathbb{Q}(\sqrt{a_1}, \sqrt{a_3}))/\mathbb{Q}) \to \mathbb{F}_2[\mathbb{F}_2] \rtimes \mathbb{F}_2$ be expansion maps with supports respectively $\{\chi_{a_1}, \chi_{a_2}\}$ and $\{\chi_{a_1}, \chi_{a_3}\}$ and pointers respectively $\chi_{a_2}$ and $\chi_{a_3}$. Then
$$
\sum_{p \mid a_3} \emph{Frob}_{L(\psi_{a_1; a_2})/\mathbb{Q}}(p) = \sum_{p \mid a_2} \emph{Frob}_{L(\psi_{a_1; a_3})/\mathbb{Q}}(p).
$$
\end{theorem}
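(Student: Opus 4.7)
The plan is to identify both sides of the claimed equality with the classical R\'edei symbol $[a_1,a_2,a_3]$ and deduce the identity from the symmetry of that symbol in its last two arguments, via the Hilbert product formula over $\mathbb{Q}(\sqrt{a_1})$.

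First, I would realize each expansion map as a R\'edei extension. The existence of $\psi_{a_1;a_2}$ forces the cup product $\chi_{a_1}\cup\chi_{a_2}\in H^2(G_{\mathbb{Q}},\mathbb{F}_2)$ to vanish (with ramification only at primes above $a_1a_2$), and the classical R\'edei construction produces $\alpha_2\in\mathbb{Q}(\sqrt{a_1})^*$ with $N_{\mathbb{Q}(\sqrt{a_1})/\mathbb{Q}}(\alpha_2)\in a_2\cdot(\mathbb{Q}^*)^2$ such that
\[
L(\psi_{a_1;a_2})=\mathbb{Q}\bigl(\sqrt{a_1},\sqrt{a_2},\sqrt{\alpha_2}\bigr);
\]
an analogous $\alpha_3$ gives $L(\psi_{a_1;a_3})=\mathbb{Q}(\sqrt{a_1},\sqrt{a_3},\sqrt{\alpha_3})$. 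The $\equiv 1\bmod 4$ hypothesis together with strong quadratic consistency lets me normalize $\alpha_i$ so that $\mathbb{Q}(\sqrt{a_1},\sqrt{\alpha_i})/\mathbb{Q}(\sqrt{a_1})$ is unramified outside primes above $a_1a_i$. For any $p\mid a_3$ (splitting completely in $\mathbb{Q}(\sqrt{a_1},\sqrt{a_2})$) and any prime $\mathfrak{p}$ of $\mathbb{Q}(\sqrt{a_1})$ above it, the Frobenius $\Frob_{L(\psi_{a_1;a_2})/\mathbb{Q}}(p)\in\mathbb{F}_2$ then equals the local Hilbert symbol $(\alpha_2,\pi_{\mathfrak{p}})_{\mathfrak{p}}$, i.e.\ the residue Legendre symbol $(\alpha_2\bmod\mathfrak{p})^{(p-1)/2}$.

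Second, I would apply the Hilbert product formula in $\mathbb{Q}(\sqrt{a_1})$ to $(\alpha_2,a_3)$ and, separately, to $(\alpha_3,a_2)$. The symbol $(\alpha_2,a_3)_v$ is trivial outside places above $2\cdot a_1a_2a_3\cdot\infty$; under our hypotheses the archimedean and $2$-adic contributions vanish, and the contribution at primes above $a_1$ vanishes thanks to the norm relation $N(\alpha_2)\in a_2\cdot(\mathbb{Q}^*)^2$ and strong quadratic consistency. Thus only the places above $a_2$ and $a_3$ contribute. The sum over places above $a_3$ reproduces the LHS by the local identification of the previous paragraph, while the sum over places above $a_2$ unpacks to a combination of Legendre symbols $(a_3/p)$ with $p\mid a_2$, a quantity manifestly symmetric in $a_2\leftrightarrow a_3$. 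The parallel product formula for $(\alpha_3,a_2)$ produces the RHS at places above $a_2$ and the same symmetric cross term at places above $a_3$; adding the two relations in $\mathbb{F}_2$ yields the desired identity.

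The main obstacle is the bookkeeping at the ramified primes above $a_1$ and the precise identification of the $a_2$-contribution of $(\alpha_2,a_3)$ with the $a_3$-contribution of $(\alpha_3,a_2)$. Both steps rely crucially on the norm normalization of $\alpha_i$ (which is forced by the unramifiedness of $L(\psi_{a_1;a_i})$ outside $a_1a_i$) and on the $\equiv 1\bmod 4$ congruences that kill the $2$-adic and archimedean local symbols. These are precisely the inputs that classical R\'edei reciprocity rests on, and verifying them carefully in the expansion-map language is the technical heart of the proof.
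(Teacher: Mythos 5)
The paper proves Theorem~\ref{prop: redei reciprocity} simply by citing it as a special case of \cite[Theorem 3.3]{KPconics}; your proposal instead re-derives it from classical R\'edei theory via the Hilbert product formula over $\mathbb{Q}(\sqrt{a_1})$. That is a legitimate and genuinely different route, and your setup (identifying $L(\psi_{a_1;a_i})$ with the R\'edei extension $\mathbb{Q}(\sqrt{a_1},\sqrt{a_i},\sqrt{\alpha_i})$ for a norm-normalized $\alpha_i\in\mathbb{Q}(\sqrt{a_1})^*$, and reading off $\Frob_{L/\mathbb{Q}}(p)$ as a residue symbol of $\alpha_i$) is correct. However, the specific product formula you invoke does not yield the statement.

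The problem is your choice to compute $(\alpha_2,a_3)$ rather than $(\alpha_2,\alpha_3)$. Fix $p\mid a_3$; by strong quadratic consistency $p$ splits in $\mathbb{Q}(\sqrt{a_1})$ as $\mathfrak{p}\bar{\mathfrak{p}}$, and \emph{both} of these places contribute to the product formula for $(\alpha_2,a_3)$ because $a_3$ has odd valuation at both. Their contributions are the tame symbols $\bigl(\tfrac{\alpha_2}{\mathfrak{p}}\bigr)$ and $\bigl(\tfrac{\alpha_2}{\bar{\mathfrak{p}}}\bigr)$, and these are equal, since their product is $\bigl(\tfrac{N\alpha_2}{p}\bigr)=\bigl(\tfrac{a_2}{p}\bigr)=1$ by consistency. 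In $\mathbb{F}_2$ the two equal terms cancel, so the total $a_3$-contribution is $0$, not the left-hand side. The $a_2$-contribution likewise vanishes, as it reduces to symbols $\bigl(\tfrac{a_3}{q}\bigr)$ with $q\mid a_2$, all of which are $1$ by consistency. So after your claimed vanishing at $a_1$, $2$, $\infty$, the product formula for $(\alpha_2,a_3)$ collapses to the tautology $0=0$, and the same happens for $(\alpha_3,a_2)$; adding them gives nothing.

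The fix, which is what the classical argument actually does, is to apply the product formula to $(\alpha_2,\alpha_3)$. Since $v_{\mathfrak{p}}(\alpha_3)+v_{\bar{\mathfrak{p}}}(\alpha_3)=v_p(N\alpha_3)=v_p(a_3)\equiv 1\pmod 2$, exactly one of $\mathfrak{p},\bar{\mathfrak{p}}$ sees odd valuation of $\alpha_3$; at that prime the tame symbol equals $\bigl(\tfrac{\alpha_2}{\mathfrak{p}}\bigr)=\Frob_{L(\psi_{a_1;a_2})/\mathbb{Q}}(p)$, and at the other prime both arguments are units so the symbol is trivial. The sum over places above $a_3$ is therefore exactly $\sum_{p\mid a_3}\Frob_{L(\psi_{a_1;a_2})/\mathbb{Q}}(p)$, and symmetrically the sum over places above $a_2$ is the right-hand side. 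After verifying the vanishing at $a_1$, $2$, $\infty$ (the inputs you list suffice), a \emph{single} product formula for $(\alpha_2,\alpha_3)$ gives the identity; your two-formula summation trick is unnecessary once the symbol is corrected, because the symmetry $(\alpha_2,\alpha_3)_v=(\alpha_3,\alpha_2)_v$ is built in.
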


\begin{proof}
This is a special case of \cite[Theorem 3.3]{KPconics}.
\end{proof}

\begin{remark} 
\label{HRR}
Theorem \ref{prop: redei reciprocity} has recently been generalized by the authors to more general expansion maps, see \cite[Theorem 3.3]{KPconics}. It is natural to wonder if this reciprocity law allows one to generalize the proof of Theorem \ref{tSharp} part $(b)$ to $n > 3$. For every $(k_1, \ldots, k_n)$ we have been able to construct vectors $(a_1, \ldots, a_n)$ with $(\omega(a_1), \ldots, \omega(a_n)) = (k_1, \ldots, k_n)$ and $|\textup{Cl}^{+}(\mathbb{Q}(\sqrt{a_1}, \ldots, \sqrt{a_n}))[2]|$ ``large". However, already for $n = 4$, we have not been able to produce maximal vectors this way.
\end{remark}

\subsection{Proof of Theorem \ref{tSharp}} 
\label{Proof}
Let us start with a proposition that immediately yields part $(b)$ and will be an important step for part $(a)$. 

\begin{proposition} 
\label{prop: a Redei-started space}
Let $N$ and $m$ be positive integers. Then there exists a product space 
$$
X := X_1 \times \dots \times X_m,
$$
where the $X_i$ are disjoint sets of primes congruent to $1$ modulo $4$ with $|X_i| = N$ for each $i \in [m]$ such that
$$
\prod_{i \in S} X_i
$$
consists entirely of maximal vectors for every subset $S \subseteq [m]$ with $1 \leq |S| \leq 3$.
\end{proposition}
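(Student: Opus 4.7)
The plan is to construct $X_1, \ldots, X_m$ greedily by adding primes one at a time, using the Chebotarev density theorem at each step to impose a finite list of Legendre and R\'edei splitting conditions. First I would translate the maximality requirement for $|S| \le 3$ into explicit splitting conditions via Theorem \ref{tRecursive}. For $|S|=1$, any prime $p \equiv 1 \pmod 4$ yields a maximal vector by Gauss's formula ($\omega(p)-1=0$). For $|S|=2$ with $S=\{i,j\}$, a pair $(p,q)$ is maximal iff $q$ splits completely in $H_2^{+}(\mathbb{Q}(\sqrt{p})) = \mathbb{Q}(\sqrt{p})$ (the equality coming from the vanishing of the $2$-torsion of the narrow class group of a real quadratic field ramified at a single prime $\equiv 1 \pmod 4$), which reduces to the single Legendre condition $\bigl(\tfrac{p}{q}\bigr)=1$. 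For $|S|=3$ with $S=\{i,j,k\}$, a triple $(p,q,r)$ is maximal iff each pair is maximal and each prime splits completely in $H_2^{+}$ of the biquadratic generated by the other two; pair-maximality forces $\dim_{\mathbb{F}_2}\text{Cl}^{+}[2]=1$ in the biquadratic, whence $H_2^{+}$ coincides with the $D_4$-extension $L(\psi_{q;r})$ of $\mathbb{Q}$ (both have degree $8$ over $\mathbb{Q}$ and $L(\psi_{q;r})$ is unramified over the biquadratic by Proposition \ref{prop: at most one expansion}(b)), and the condition reads as the vanishing of the R\'edei symbol $[p,q,r]$.

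Second, by R\'edei reciprocity (Theorem \ref{prop: redei reciprocity}) the symbol $[p,q,r]$ is symmetric in its arguments, so when $q,r$ are already fixed the constraint $[p,q,r]=0$ becomes the Chebotarev condition that $\text{Frob}_p$ be trivial in the fixed Galois extension $L(\psi_{q;r})/\mathbb{Q}$. I would then run the induction: enumerate the $mN$ primes to be chosen in any convenient order (say, round-robin through $X_1,\ldots,X_m$). When it is the turn to add a new prime to $X_i$, let $K_i$ be the compositum over $\mathbb{Q}$ of $\mathbb{Q}(i)$, of the quadratic fields $\mathbb{Q}(\sqrt{p'})$ for all previously chosen $p' \in \bigcup_{j \neq i} X_j$, and of the R\'edei fields $L(\psi_{q;r})$ for all previously chosen pairs $(q,r) \in X_j \times X_k$ with $j,k \in [m]\setminus\{i\}$, $j \neq k$. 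By Chebotarev, infinitely many primes split completely in $K_i$; pick one distinct from every previously chosen prime and place it in $X_i$. By construction the new prime is $\equiv 1 \pmod 4$ and satisfies all new Legendre and R\'edei conditions created by its introduction.

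The main subtlety I would expect is bookkeeping: one must make sure that each R\'edei field $L(\psi_{q;r})$ is already defined at the moment one wants to impose splitting in it. This is automatic provided the primes are added one at a time in the order described, because each previously chosen pair $(q,r) \in X_j \times X_k$ already satisfies $\bigl(\tfrac{q}{r}\bigr)=1$ (the condition was enforced as soon as the second prime of the pair was added), so $(q,r)$ is pair-maximal and $L(\psi_{q;r})$ exists as a $D_4$-extension of $\mathbb{Q}$; the compositum $K_i$ is therefore a well-defined finite Galois extension of $\mathbb{Q}$ and Chebotarev applies. Once the iteration terminates, Theorem \ref{tRecursive} applied to each subset $S \subseteq [m]$ with $|S|\le 3$ shows that every vector in $\prod_{i \in S} X_i$ is maximal, and by construction the $X_i$ are disjoint sets of $N$ primes congruent to $1$ modulo $4$.
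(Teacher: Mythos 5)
Your proposal is correct and takes essentially the same approach as the paper: impose the needed pair conditions via quadratic reciprocity and the needed triple conditions via R\'edei reciprocity, each time realizing the new constraints on a freshly chosen prime by Chebotarev in an appropriate compositum. The only organizational difference is that the paper adds an entire block $X_{m+1}$ at once (inducting on $m$, requiring splitting in $H_2^{+}(K(X_1 \times \dots \times X_m))\,\mathbb{Q}(\sqrt{-1})$ and then using reciprocity to propagate the conditions back), whereas you add one prime at a time and compute the exact list of Legendre and R\'edei fields; both routes give the same result, and your reduction of pair- and triple-maximality to $\bigl(\tfrac{p}{q}\bigr)=1$ and the vanishing of the R\'edei symbol matches what the paper encodes through Theorems \ref{t:all expansions} and \ref{tCommvect}.
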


\begin{proof}
We proceed by induction on $m$. For $m = 1$ the statement is trivial. Now suppose that the statement is true for $m$, so that we have to prove it for $m + 1$. Pick a product set $X_1 \times \dots \times X_m$ guaranteed by the inductive hypothesis. Consider the set $Z$ of primes that split completely in $H_2^{+}(K(X_1 \times \dots \times X_m)) \Q(\sqrt{-1})/ \mathbb{Q}$. Thanks to the Chebotarev Density Theorem, we see that $Z$ is an infinite set. 

Pick any $N$-set $X_{m + 1}$ inside $Z$. Observe that $X_{m + 1}$ is disjoint from each of the $X_i$ with $i \leq m$, since these are all primes ramifying in $H_2^{+}(K(X_1 \times \dots \times X_m))/ \mathbb{Q}$. Next, since $X_{m+1}$ consists in particular of primes splitting in $K(X_1 \times \dots \times X_m)/ \mathbb{Q}$, we see that $X_{i_1} \times X_{i_2}$ consists entirely of maximal vectors for every distinct $i_1$ and $i_2$. Hence for each $2$-set $\{i_1, i_2\}$ and every point $(p, q) \in X_{i_1} \times X_{i_2}$ we have an expansion map
$$
\psi_{p; q}: \text{Gal}(H_2^+(\mathbb{Q}(\sqrt{p},\sqrt{q}))/\mathbb{Q}) \to \mathbb{F}_2[\mathbb{F}_2] \rtimes \mathbb{F}_2
$$
with support $\{\chi_p, \chi_q\}$ and pointer $\chi_q$. 

Thanks to our choice of $X_{m + 1}$, we have that every $x$ in $X_{m + 1}$ splits completely in $L(\psi_{p; q})$ whenever $i_1, i_2 \leq m$ are distinct. But then Theorem \ref{prop: redei reciprocity} yields that $p$ splits completely in $L(\psi_{q; x})$ and $q$ splits completely in $L(\psi_{p; x})$. Therefore the proposition follows from Theorem \ref{t:all expansions} and Theorem \ref{tCommvect}.
\end{proof}

\begin{proof}[Proof of Theorem \ref{tSharp} part $(b)$] 
By taking $m = 3$ and $N$ arbitrary large, we see that Proposition \ref{prop: a Redei-started space} immediately implies part $(b)$ of Theorem \ref{tSharp} for $(k_1, k_2, k_3) = (1, 1, 1)$. The general case then follows from Proposition \ref{prop: additivity}.
\end{proof}

\begin{proof}[Proof of Theorem \ref{tSharp} part $(a)$] 
Take $n \in \mathbb{Z}_{\geq 4}$ and $(k_1, \ldots, k_n) \in \mathbb{Z}_{\geq 1} \times (2\mathbb{Z}_{\geq 1})^{n-1}$. Fix furthermore an auxiliary parameter $M \in \mathbb{Z}_{\geq 1}$. It follows from Proposition \ref{prop: additivity} and Proposition \ref{prop: a Redei-started space} that we can construct a $((k_1, \ldots, k_n), M)$-space 
$$
Y := Y_1 \times \dots \times Y_n,
$$
equipped with a choice of inertia $\mathfrak{G}$ such that for any $3$-set $\{i_1,i_2,i_3\} \subseteq [n]$, any triple $(y_{i_1},y_{i_2},y_{i_3}) \in Y_{i_1} \times Y_{i_2} \times Y_{i_3}$ and any prime divisor $p \mid y_{i_3}$ we have that the map
$$
\psi_{y_{i_1},y_{i_2};p}(\mathfrak{G})
$$
exists. Fix such a $((k_1, \ldots, k_n), M)$-space $Y$. Also fix a point $y_1 \in Y_1$ and put 
$$
\widetilde{Y}:=\{y_1\} \times Y_2 \times \ldots \times Y_n.
$$
We are going to construct an additive system on $\widetilde{Y}$ for the subsets $S$ of $[n] - \{1\}$. We start by defining subsets
$$
C_S \subseteq \prod_{i \in S} Y_i^2 \times \prod_{j \in [n] - S} Y_j
$$
for every $S \subseteq [n] - \{1\}$. Let us first consider the case that $|S| \leq n-2$. We define $C_S$ by the property that $a \in C_S$ if and only if for each $2$-set $\{i, j\} \subseteq [n] - S$ and for every prime divisor $p \mid \pi_j(a)$, we have that
$$
\psi_{\pi_{S \cup \{i\}}(c(a)); p}(\mathfrak{G})
$$ 
exists. We next put $C_{[n]-\{1\}}$ to be the set of $a$ in $\{y_1\} \times \prod_{2 \leq h \leq n}Y_h^2$ such that for any $j \in [n] - \{1\}$ and any prime divisor $p$ of $\pi_j(c(a))$ we have that
$$
\psi_{\pi_{[n]-\{j\}}(c(a)); p}(\mathfrak{G})
$$
exists and furthermore
\[
\psi_{\pi_{[n]-\{1, j\}}(c(a)), \text{pr}_k(\pi_j(a)); p}(\mathfrak{G})
\]
exists for all $j \in [n] - \{1\}$, $k \in [2]$ and $p$ dividing $y_1$.

We now define the spaces $A_S$. Assume first that $|S| \leq n-3$. We put $A_S$ to be the space of formal $\mathbb{F}_2$-linear combinations of $5$-tuples
$$
(x_1, x_2, x_3, x_4, x_5),
$$ 
where $x_1,x_2,x_3 \in [n] - S$ are pairwise distinct and $x_4 \in [k_{x_2}], x_5 \in [k_{x_3}]$. Instead for $|S| \in \{n - 2, n - 1\}$, we set $A_S = \{0\}$. 

Let us now define $F_S: C_S \to A_S$. In case $|S| > n - 3$, we set $F_S$ to be the trivial map. Henceforth we assume that $|S| \leq n - 3$. Let $(x_1, x_2, x_3, x_4, x_5)$ be a $5$-tuple as above and $a \in C_S$. Let $p_{x_4}(a)$ be the $x_4$-th prime divisor of $\pi_{x_2}(c(a))$, by the natural ordering, and let $p_{x_5}(a)$ be the $x_5$-th prime divisor of $\pi_{x_3}(c(a))$. We have that the Frobenius of $p_{x_5}(a)$ lands in the center of 
$$
\text{Gal}(L(\psi_{\pi_{S \cup \{x_1\}}(c(a)); p_{x_4}(a)}(\mathfrak{G}))/\mathbb{Q})
$$
thanks to Theorem \ref{tCommvect-with-inertia} and the definition of $C_S$. Observe that the center of
\[
\FF_2[t_1, \dots, t_n]/(t_1^2, \dots, t_n^2) \rtimes \FF_2^n
\] 
is cyclic of order $2$ and generated by $t_1 \cdot \ldots \cdot t_n$. Hence to decide whether an element of the center is trivial or not one may simply apply the $1$-cochain $\phi_{S \cup \{x_1\}}(\psi_{\pi_{S \cup \{x_1\}; p_{x_4}(a)}}(\mathfrak{G}))$ to the central element. In other words the value
$$
\phi_{S \cup \{x_1\}}(\psi_{\pi_{S \cup \{x_1\}}(c(a)); p_{x_4}(a)}(\mathfrak{G}))(\text{Frob}(p_{x_5}(a)))
$$
is well-defined and equals $0$ if and only if $p_{x_5}(a)$ splits completely in the field of definition of $\psi_{\pi_{S \cup \{x_1\}}(c(a)); p_{x_4}(a)}(\mathfrak{G})$. With this preliminary in mind, we define $F_S(a)$ to be the vector of $A_S$ whose $(x_1, x_2, x_3, x_4, x_5)$-coordinate equals 
$$
\phi_{S \cup \{x_1\}}(\psi_{\pi_{S \cup \{x_1\}}(c(a)); p_{x_4}(a)}(\mathfrak{G}))(\text{Frob}(p_{x_5}(a)))
$$
for each $5$-tuple $(x_1,x_2,x_3,x_4,x_5)$ as described above. Finally, for each $S \subseteq [n]-\{1\}$, we put
$$
C_S^{\text{acc}} := F_S^{-1}(0).
$$
We now establish the following crucial fact.  

\begin{proposition} 
\label{prop: construction of additive system}
The $4$-tuple $\{(C_S, C_S^{\emph{acc}}, F_S, A_S)\}_{S \subseteq [n] - \{1\}}$ defined above is an additive system on $\widetilde{Y}$. Furthermore,
$$
|A_S| \leq 2^{n \cdot (\sum_{i = 1}^n k_i)^2}
$$
for each $S \subseteq [n] - \{1\}$. 

Finally, for all $a \in C_{[n]-\{1\}}$ we have that the vector $(\pi_i(c(a)))_{i \in [n]: \chi_{\pi_i(c(a))} \neq 0}$ is a maximal vector of dimension $|\{i \in [n]: \chi_{\pi_i(c(a))} \neq 0\}|$.  
\end{proposition}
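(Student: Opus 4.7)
My plan is to verify the three assertions by exploiting the dictionary built up in the preparations: the recursive structure of Definition \ref{dAdditive} will come from Theorem \ref{tCommvect-with-inertia}, the additivity equation from Proposition \ref{prop: additivity}, and the final maximality statement from Theorem \ref{t:all expansions}. Starting with the additive system axioms: $C_S^{\textup{acc}} = F_S^{-1}(0)$ is built in, and for the recursion, $a \in C_S$ amounts to the existence of certain expansion maps $\psi_{\pi_{S \cup \{i\}}(c(a)); p}(\mathfrak{G})$. Applying Theorem \ref{tCommvect-with-inertia} coordinate-by-coordinate in $S$ converts this into the existence of the analogous expansion maps on each restriction $y$ of $a$ at $j \in S$, paired with splitting conditions that, aggregated over the two halves $\textup{pr}_k(\pi_j(a))$, are precisely those recorded by $F_{S - \{j\}}(y) = 0$ (vacuously when $|S - \{j\}| \geq n - 2$, since there $A_{S - \{j\}} = \{0\}$); this matches the recursive clause of Definition \ref{dAdditive}. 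For the additivity equation (\ref{eAdditivity}), given a triangle $x_1, x_2, x_3 \in C_S$ at $i_0 \in S$, Proposition \ref{prop: additivity} applied with doubled set $S$, single set $\{x_1'\}$ and pointer $p_{x_4'}$ yields additivity of the cochain $\phi_{S \cup \{x_1'\}}(\psi_{\pi_{S \cup \{x_1'\}}(c(\cdot)); p_{x_4'}}(\mathfrak{G}))$; evaluating at the common Frobenius $\textup{Frob}(p_{x_5'})$ (which depends only on coordinates outside $S$) gives $F_S(x_1) + F_S(x_2) = F_S(x_3)$ coordinate by coordinate.

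Second, the cardinality bound: each coordinate of $A_S$ corresponds to a 5-tuple with $x_1, x_2, x_3$ distinct in $[n] - S$, $x_4 \in [k_{x_2}]$ and $x_5 \in [k_{x_3}]$, and
\[
\#\{\text{5-tuples}\} \leq \sum_{x_1 \in [n]} \sum_{x_2 \in [n]} k_{x_2} \sum_{x_3 \in [n]} k_{x_3} = n \cdot \left(\sum_{i=1}^n k_i\right)^2,
\]
whence $|A_S| \leq 2^{n \cdot (\sum_{i=1}^n k_i)^2}$.

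Third, and this is the crux, I would establish maximality. Fix $a \in C_{[n] - \{1\}}$ and set $I := \{i \in [n] : \chi_{\pi_i(c(a))} \neq 0\}$. By Theorem \ref{t:all expansions} combined with the descending direction of Theorem \ref{tCommvect-with-inertia}, it suffices to produce, for every $j \in I$ and every prime $p$ dividing the $j$-th entry, an expansion map $\psi_{\pi_{I - \{j\}}(c(a)); p}(\mathfrak{G})$. For $j \in [n] - \{1\}$ and $p \mid \pi_j(c(a))$, this is precisely the map built into the first clause of the definition of $C_{[n] - \{1\}}$. The case $j = 1$ is the main obstacle, as the definition only supplies the auxiliary maps $\psi_{\pi_{[n] - \{1, h\}}(c(a)),\, \textup{pr}_k(\pi_h(a));\, p}(\mathfrak{G})$ for $h \in [n] - \{1\}$, $k \in [2]$ and $p \mid y_1$. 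Peeling off the $\textup{pr}_k(\pi_h(a))$-coordinate from each via the descending direction of Theorem \ref{tCommvect-with-inertia} yields $\psi_{\pi_{[n] - \{1, h\}}(c(a)); p}(\mathfrak{G})$, and the splitting clause of the same theorem forces every prime divisor of $\textup{pr}_k(\pi_h(a))$ to split completely in its field of definition. Letting $k$ range over $[2]$ and $h$ over $[n] - \{1\}$, every prime ramifying in $\mathbb{Q}(\sqrt{\pi_h(c(a))})$ splits completely there for every $h$, and the ascending direction of Theorem \ref{tCommvect-with-inertia} then assembles these into the desired $\psi_{\pi_{[n] - \{1\}}(c(a)); p}(\mathfrak{G})$.
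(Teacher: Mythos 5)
Your proof follows the same overall strategy as the paper's very terse argument: additivity of $F_S$ from Proposition \ref{prop: additivity}, the $|A_S|$ bound by counting $5$-tuples, and the maximality claim from Theorem \ref{t:all expansions} together with the descending/ascending directions of Theorem \ref{tCommvect-with-inertia} (the paper cites Theorem \ref{tCommvect}, of which \ref{tCommvect-with-inertia} is the refinement actually needed here). Your treatment of the $j=1$ case of maximality, via the auxiliary maps $\psi_{\pi_{[n]-\{1,h\}}(c(a)),\,\mathrm{pr}_k(\pi_h(a));\,p}(\mathfrak{G})$ built into the definition of $C_{[n]-\{1\}}$, is exactly the point the paper must implicitly be relying on, and you spell it out correctly.

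The one place where you go beyond the paper is the verification that the $C_S$ actually satisfy the recursive clause of Definition \ref{dAdditive}; the paper's proof does not mention this requirement at all. Your sketch there is directionally right but stops short of a full argument: Theorem \ref{tCommvect-with-inertia} applied at $j\in S$ to the maps $\psi_{\pi_{S\cup\{i\}}(c(a));p}(\mathfrak{G})$ produces existence of the maps supported on $(S-\{j\})\cup\{i\}$ together with splitting of the primes of $c(a)_j$, but the condition $F_{S-\{j\}}(y)=0$ also demands vanishing for $5$-tuples with $x_1=j$ (involving expansion maps supported on $\pi_S(c(y))$, with the half-piece $y_j$ rather than the product $c(a)_j$ at coordinate $j$) and with $x_2=j$, and these do not fall out immediately from a single coordinate removal. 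Showing that the splitting conditions "aggregate precisely" to $F_{S-\{j\}}(y)=0$ therefore needs more care — an interplay of removals, re-insertions via Theorem \ref{tCommvect-with-inertia}(b)$\Rightarrow$(a), and possibly Proposition \ref{prop: additivity} again — rather than the one-line assertion you give. That said, the paper's own proof of this proposition is even more abbreviated and omits this point entirely, so your version is, if anything, closer to a complete argument.
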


\begin{proof}
Equation (\ref{eAdditivity}) is satisfied thanks to Proposition \ref{prop: additivity}. The bound on $|A_S|$ follows from straightforward counting. The maximality claim is a consequence of Theorem \ref{t:all expansions} and Theorem \ref{tCommvect}.
\end{proof}

We now finish the proof of Theorem \ref{tSharp}, part $(a)$. Due to Proposition \ref{prop: construction of additive system} and Proposition \ref{shrinking lemma} we deduce that there exists a positive number $c_{(k_1, \ldots, k_n)}$, depending only on the vector $(k_1, \ldots, k_n)$, such that there are at least 
$$
c_{(k_1, \ldots, k_n)} \cdot M^{2n-2}
$$
vectors $a \in \{y_1\} \times \prod_{2 \leq i \leq n}Y_i^2$ with $(\pi_i(c(a)))_{i \in [n]: \chi_{\pi_i(c(a))} \neq 0}$ maximal. On the other hand, no more than $(n-1)\cdot M^{2n-3}$ vectors $a$ in $\{y_1\} \times \prod_{2 \leq i \leq n}Y_i^2$ are such that $\text{pr}_1(\pi_i(a)) = \text{pr}_2(\pi_i(a))$ for some $i$. It follows that there at least 
$$
c_{(k_1, \ldots, k_n)} \cdot M^{2n-2} - (n - 1) \cdot M^{2n-3}
$$
vectors in $C_{[n] - \{1\}}$ with distinct coordinates. Each of them gives a maximal vector $c(a)$ such that
$$
\omega(\pi_i(c(a))) = k_i
$$
for each $i \in [n]$. Precisely $2^{n - 1}$ choices of $a$ will give rise to the same vector when passing to $c(a)$. All in all we have obtained at least
$$
\frac{c_{(k_1, \ldots, k_n)} \cdot M^{2n-2} - (n - 1) \cdot M^{2n-3}}{2^{n-1}}
$$
distinct multiquadratic fields $\mathbb{Q}(\sqrt{a_1}, \ldots, \sqrt{a_n})$ with $(a_1, \ldots, a_n)$ a maximal vector and with $\omega(a_i) = k_i$ for each $i \in [n]$. For $M$ going to infinity this quantity goes to infinity, which gives us the desired conclusion.
\end{proof}
  
\section{Proof of Theorem \ref{tSharp2} and Corollary \ref{cUnits}}
In this section we give a proof of Theorem \ref{tSharp2} and Corollary \ref{cUnits}. We start by demonstrating that Corollary \ref{cUnits} is a simple consequence of Theorem \ref{tSharp2}. Denote by $K := \mathbb{Q}(\sqrt{a_1}, \ldots, \sqrt{a_n})$ a field satisfying the conclusion of Theorem \ref{tSharp2}. Recall that we have an exact sequence
$$
0 \to \frac{(\mathcal{O}_K/c)^{*}}{\mathcal{O}_K^{*}} \to \text{Cl}(K,c) \to \text{Cl}(K) \to 0. 
$$
To ease the notation, let us denote by $A$ the group $\frac{(\mathcal{O}_K/c)^{*}}{\mathcal{O}_K^{*}}$. This gives the inequality
\begin{align*}
\text{dim}_{\mathbb{F}_2} \text{Cl}(K,c)[2] 
&\leq \text{dim}_{\mathbb{F}_2} \text{Cl}(K)[2] + \text{dim}_{\mathbb{F}_2}A[2] \\
&\leq \omega(a_1 \cdot \ldots \cdot a_n) \cdot 2^{n-1} - 2^n + 1 + 2^n\cdot \omega(c). 
\end{align*}
The second inequality can be an equality only if
$$
\text{dim}_{\mathbb{F}_2}\text{Cl}(K)[2] = \omega(a_1 \cdot \ldots \cdot a_n) \cdot 2^{n-1} -2^n + 1,
$$
and
$$
\text{dim}_{\mathbb{F}_2} A[2] = 2^n \cdot \omega(c),
$$
thanks to Theorem \ref{tBound} (for the first equation) and simple counting (for the second equation). Therefore we deduce from
$$
\text{dim}_{\mathbb{F}_2} \text{Cl}(K,c)[2] =\omega(a_1 \cdot \ldots \cdot a_n) \cdot 2^{n-1} - 2^n + 1 + 2^n \cdot \omega(c)
$$
that
$$
\text{dim}_{\mathbb{F}_2} \text{Cl}(K)[2] = \omega(a_1 \cdot \ldots \cdot a_n) \cdot 2^{n-1} - 2^n + 1
$$
and
$$
\text{dim}_{\mathbb{F}_2} A[2] = 2^n\cdot \omega(c).
$$
Observe that we have a surjection
$$
\phi:\frac{(\mathcal{O}_K/c)^{*}}{(\mathcal{O}_K/c)^{*2}} \to \frac{A}{2A}.
$$
The above equations imply that
$$
\text{dim}_{\mathbb{F}_2} \frac{(\mathcal{O}_K/c)^{*}}{(\mathcal{O}_K/c)^{*2}} = 2^n \cdot \omega(c) = \text{dim}_{\mathbb{F}_2}A[2] = \text{dim}_{\mathbb{F}_2}\frac{A}{2A},
$$
whence $\phi$ is an isomorphism. On the other hand 
$$
\text{ker}(\phi) = \text{im}\left(\text{red}_c(K): \frac{\mathcal{O}_K^{*}}{\mathcal{O}_K^{*2}} \to \frac{(\mathcal{O}_K/c)^{*}}{(\mathcal{O}_K/c)^{*2}}\right),
$$
where $\text{red}_c(K)$ is the natural reduction map modulo $c$. We conclude that the map $\text{red}_c(K)$ is trivial as desired.

It remains to prove Theorem \ref{tSharp2}. To this end we switch to the set-up of the proof of Theorem \ref{tSharp} and indicate the necessary modifications. First of all, we recall that the choice of $y_1$ was arbitrary, so we are allowed to take $y_1 := c$. Now suppose that $(c, a_1, \ldots, a_n)$ is a maximal vector such that all expansion maps have totally real field of definition. Then we claim that 
$$
\text{dim}_{\mathbb{F}_2} \text{Cl}(\mathbb{Q}(\sqrt{a_1}, \ldots, \sqrt{a_n}), c)[2] = \omega(a_1 \cdot \ldots \cdot a_n)\cdot 2^{n-1} - 2^n + 1 + 2^{n} \cdot \omega(c).
$$
Surely we have that
$$
\text{dim}_{\mathbb{F}_2} \text{Cl}(\mathbb{Q}(\sqrt{a_1}, \ldots, \sqrt{a_n}))[2] = \omega(a_1 \cdot \ldots \cdot a_n) \cdot 2^{n-1} - 2^n + 1.
$$
But now observe that the collection of characters
$$
\{\phi_{T}(\psi_{a_1, \ldots, a_n; l}(\mathfrak{G}))\}_{l \mid c \ \text{prime}, \ T \subseteq [n]}
$$
is linearly independent and generates a subspace of
$$
\text{Cl}(\mathbb{Q}(\sqrt{a_1}, \ldots, \sqrt{a_n}), c)^{\vee}[2]
$$
linearly disjoint from 
$$
\text{Cl}(\mathbb{Q}(\sqrt{a_1}, \ldots, \sqrt{a_n}))^{\vee}[2]
$$
by ramification considerations. This gives precisely the $2^n \cdot \omega(c)$ additional characters in $\text{Cl}(\mathbb{Q}(\sqrt{a_1}, \ldots, \sqrt{a_n}),c)^{\vee}[2]$ and therefore yields
$$
\text{dim}_{\mathbb{F}_2} \text{Cl}(\mathbb{Q}(\sqrt{a_1}, \ldots, \sqrt{a_n}), c)[2] = \omega(a_1 \cdot \ldots \cdot a_n) \cdot 2^{n-1} - 2^n + 1 + 2^n \cdot \omega(c)
$$
as desired.

We still need to explain how one ensures that all expansion maps are totally real. First of all, we indicate how Proposition \ref{prop: a Redei-started space} can be modified to ensure that all the maps $\psi_{y_1; y_2}(\mathfrak{G})$ are totally real. In this case we use a more general version \cite{Stevenhagen} of R\'edei reciprocity, which includes $-1$ (taking the role of the infinite place). Next one enlarges $A_S$ to encode the splitting condition at infinity, and the maps $F_S$ are also extended accordingly. With these modifications in mind, one proceeds exactly with the same argument as in Theorem \ref{tSharp}.

\end{document}